\def\co{\colon\thinspace}
\DeclareMathAlphabet{\mathsfsl}{OT1}{cmss}{m}{sl}
\newcommand{\spin}{\mathrm{Spin}^c}
\newcommand{\relspin}{\underline{\mathrm{Spin}^c}}
\newtheorem{thm}{Theorem}[section]
\newtheorem{lem}[thm]{Lemma}
\newtheorem{cor}[thm]{Corollary}
\newtheorem{prop}[thm]{Proposition}
\theoremstyle{definition}
\newtheorem{rem}[thm]{Remark}
\begin{document}

\title{Heegaard Floer correction terms and rational genus bounds}

\author{{\Large Yi NI\footnote{Corresponding author}\quad Zhongtao WU\footnote{Present address: Department of Mathematics, The Chinese Universiy of Hong Kong,
Shatin, Hong Kong,
{\it Email :}
ztwu@math.cuhk.edu.hk}}\\{\normalsize Department of Mathematics, Caltech, MC 253-37}\\
{\normalsize 1200 E California Blvd, Pasadena, CA
91125}\\{\small\it Email\/:\quad\rm yini@caltech.edu\quad zhongtao@caltech.edu}}

\date{}
\maketitle

\begin{abstract}
Given an element in the first homology of a rational homology $3$--sphere $Y$, one can consider the minimal rational genus of all knots in this homology class.
This defines a function $\Theta$ on $H_1(Y;\mathbb Z)$, which was introduced by Turaev as an analogue of Thurston norm. We will give a lower bound for this function using the correction terms in Heegaard Floer homology. As a corollary, we show that Floer simple knots in L-spaces are genus minimizers in their homology classes, hence answer questions of Turaev and Rasmussen about genus minimizers in lens spaces.
\end{abstract}

\section{Introduction}

Heegaard Floer homology, introduced by Ozsv\'ath and Szab\'o \cite{OSzAnn1}, has been very successful in the study of low-dimensional topology. One important feature of Heegaard Floer homology which makes it so useful is that it gives a lower bound for the genus of surfaces in a given homology class. In dimension $3$, it determines the Thurston norm \cite{OSzGenus}. In dimension $4$, the adjunction inequality \cite{OSz4Manifold} gives a lower bound to the genus of surfaces which is often sharp, and the concordance invariant \cite{OSz4Genus} gives a lower bound to the slice genus of knots. Such kind of results have been known before in Donaldson theory and Seiberg--Witten theory \cite{KMMilnor, KMDonaldson,KMThom,MSzT,KMNorm}, but the combinatorial nature of Heegaard Floer homology makes the corresponding results easier to use in many problems.

In this paper, we will study a new type of genus bounds. Suppose that $Y$ is a closed oriented $3$--manifold, there is a kind of ``norm'' function one can define on the torsion subgroup of $H_1(Y;\mathbb Z)$. To define it, let us first recall  the rational genus of a rationally null-homologous knot $K\subset Y$ defined by Calegari and Gordon \cite{CG}.

Suppose that $K$ is a rationally null-homologous oriented knot in $Y$, and $\nu(K)$ is a tubular neighborhood of $K$. A properly embedded oriented connected surface $F\subset Y\backslash\overset{\circ}{\nu}(K)$ is called a {\it rational Seifert surface} for $K$, if $\partial F$ consists of coherently oriented parallel curves on $\partial\nu(K)$, and the orientation of $\partial F$ is coherent with the orientation of $K$. The {\it rational genus} of $K$ is defined to be
$$||K||=\min_{F}\frac{\max\{0,-\chi(F)\}}{2|[\mu]\cdot[\partial F]|},$$
where $F$ runs over all the rational Seifert surfaces for $K$, and $\mu\subset\partial\nu(K)$ is the meridian of $K$.

The rational genus is a natural generalization of the genus of null-homologous knots. Moreover, given a torsion class in $H_1(Y)$, one can consider the minimal rational genus for all knots in this torsion class. More precisely,
given $a\in\mathrm{Tors} H_1(Y)$, let 
$$\Theta(a)=\min_{K\subset Y,\:[K]=a}2||K||.$$
This $\Theta$ was introduced by Turaev \cite{TuFunc} in a slightly different form. Turaev regarded $\Theta$ as an analogue of Thurston norm \cite{Th}, in the sense that it measures the minimal normalized Euler characteristic of a ``folded surface'' representing a given class in $H_2(Y;\mathbb Q/\mathbb Z)$.

In \cite{TuFunc}, Turaev gave a lower bound for $\Theta$ in terms of his torsion function. When $b_1(Y)>0$, Turaev's torsion function is a kind of Euler characteristic of Heegaard Floer homology \cite{OSzAnn2}, so his lower bound can be reformulated in terms of Heegaard Floer homology. (One can even expect to get a better bound with Heegaard Floer homology.) When $b_1(Y)=0$, the relationship between Turaev's torsion function and Heegaard Floer homology is not very clear in the literature. Nevertheless, our following theorem gives an independent  lower bound in terms of the correction terms $d(Y,\mathfrak s)$ in Heegaard Floer homology \cite{OSzAbGr}.

\begin{thm}\label{thm:GenusBound}
Suppose that $Y$ is a rational homology $3$--sphere, $K\subset Y$ is a knot, $F$ is a rational Seifert surface for $K$. Then
\begin{equation}\label{eq:GenusBound}
1+\frac{-\chi(F)}{|[\partial F]\cdot [\mu]|}\ge \max_{\mathfrak s\in\spin(Y)}\big\{d(Y,\mathfrak s+\mathrm{PD}[K])-d(Y,\mathfrak s)\big\}.
\end{equation}
The right hand side of (\ref{eq:GenusBound}) only depends on the manifold $Y$ and the homology class of $K$, so it gives a lower bound for $1+\Theta(a)$ for the homology class $a=[K]$.
\end{thm}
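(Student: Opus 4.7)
My approach is the standard surgery-cobordism method: build a $2$-handle cobordism $W$ from $Y$ whose topology encodes the rational Seifert surface $F$, then combine the adjunction inequality with the Ozsv\'ath--Szab\'o correction-term inequality for negative-definite cobordisms. Set $p=|[\partial F]\cdot[\mu]|$ and fix an integer framing $\lambda$ of $K$ with $[\partial F]=p[\lambda]+r[\mu]$ on $\partial\nu(K)$; after replacing $\lambda$ by $\lambda+N\mu$ for a suitable integer $N$, let $W$ be the cobordism from $Y$ to $Y':=Y_\lambda(K)$ obtained by attaching a $2$-handle along $K\subset Y\times\{1\}$ with framing $\lambda$, arranged so that $W$ is negative definite. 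Inside $W$, close off the rational Seifert surface $F$ by attaching $p$ parallel pushoffs of the core of the handle together with $r$ meridian disks of $\nu(K)$ to produce an oriented closed surface $\widehat F\subset W$ satisfying
\[
-\chi(\widehat F)\le -\chi(F)+p,\qquad [\widehat F]^2=p^2\,\mathrm{fr}(\lambda)+2pr.
\]

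Fix $\mathfrak s\in\spin(Y)$. I next select $\mathfrak t_+,\mathfrak t_-\in\spin(W)$ with $\mathfrak t_+|_Y=\mathfrak s+\mathrm{PD}[K]$, $\mathfrak t_-|_Y=\mathfrak s$, and $\mathfrak t_+|_{Y'}=\mathfrak t_-|_{Y'}$; this amounts to producing an element of $\ker\big(H^2(W)\to H^2(Y')\big)$ whose image in $H^2(Y)$ is $\mathrm{PD}[K]$, which can be arranged by a suitable choice of the framing parameter $N$. The correction-term inequality
\[
c_1(\mathfrak t_\pm)^2+b_2^-(W)\le 4\,d(Y',\mathfrak t_\pm|_{Y'})-4\,d(Y,\mathfrak t_\pm|_Y)
\]
holds for each sign, and the adjunction inequality for $\widehat F$ yields
\[
|\langle c_1(\mathfrak t_\pm),[\widehat F]\rangle|+[\widehat F]^2\le -\chi(\widehat F).
\]
Subtracting the two correction-term inequalities eliminates the common $d(Y',\cdot)$ contribution, while adjunction controls the Chern-class pairings that appear in the difference; since $[\widehat F]$ meets $Y$ in $p[K]$, tracking the factor of $p$ and rearranging produces
\[
p\cdot\big(d(Y,\mathfrak s+\mathrm{PD}[K])-d(Y,\mathfrak s)\big)\le -\chi(F)+p,
\]
which on dividing by $p$ gives (\ref{eq:GenusBound}).

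The delicate step is the Spin$^c$ bookkeeping: the generator of $H^2(W)\cong\mathbb Z$ restricts to $p\cdot\mathrm{PD}[K]\in H^2(Y)$, which is \emph{torsion} of order dividing the order of $[K]\in H_1(Y)$, so the na\"\i ve shift by this generator does not immediately produce a $\mathrm{PD}[K]$-shift on $Y$. I expect to circumvent this by varying the framing parameter $N$ and tracking how the restriction maps of the family $\{W_N\}$ sweep out the full coset of $\mathrm{PD}[K]$-shifts, or equivalently by changing surgery slope so that $p$ gets replaced by the order of $[K]$. The remaining Chern-class and adjunction manipulations are routine, but verifying that the constant on the right-hand side of (\ref{eq:GenusBound}) is exactly $1$ (and not, say, off by a multiple of $p$) is where the combinatorial care concentrates.
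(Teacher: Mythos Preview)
Your approach is entirely different from the paper's, and it contains a fatal gap --- but not the one you flag. The Spin$^c$ bookkeeping actually works out: $\ker\big(H^2(W)\to H^2(Y')\big)$ is the image of $H^2(W,Y')\cong\mathbb Z$, and under Poincar\'e--Lefschetz duality the composite $H^2(W,Y')\to H^2(W)\to H^2(Y)$ corresponds to the boundary map $H_2(W,Y)\to H_1(Y)$ sending the core disk to $[K]$. So the generator of that kernel restricts to $\pm\mathrm{PD}[K]$ on $Y$, not $p\cdot\mathrm{PD}[K]$, and the element $\delta=\mathfrak t_+-\mathfrak t_-$ you want does exist.

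The real problem is the sentence ``subtracting the two correction-term inequalities eliminates the common $d(Y',\cdot)$ contribution.'' Both inequalities
\[
c_1(\mathfrak t_\pm)^2+b_2^-(W)\;\le\;4\,d(Y',\mathfrak s')-4\,d(Y,\mathfrak t_\pm|_Y)
\]
point the \emph{same} way: each bounds $d(Y,\mathfrak t_\pm|_Y)$ from above in terms of $d(Y',\mathfrak s')$. From $A\le X$ and $B\le X$ one learns nothing about $A-B$; the $d(Y',\mathfrak s')$ terms do not cancel. To bound $d(Y,\mathfrak s+\mathrm{PD}[K])-d(Y,\mathfrak s)$ you would need an inequality in the opposite direction for one of the two Spin$^c$ structures, but reversing $W$ makes it positive definite, where no correction-term inequality is available. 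A secondary issue is that the adjunction inequality $|\langle c_1(\mathfrak t),[\widehat F]\rangle|+[\widehat F]^2\le -\chi(\widehat F)$ is not valid in a cobordism with $b_2^+(W)=0$; indeed, as you push the framing toward $-\infty$ to ensure negative-definiteness, $[\widehat F]^2\to -\infty$ and the inequality becomes vacuous rather than constraining $c_1$.

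By contrast, the paper never leaves $CFK^\infty(Y,K)$. It realizes \emph{both} $d(Y,\mathfrak s)$ and $d(Y,J\mathfrak s+\mathrm{PD}[K])$ as max--min quantities over the \emph{same} set $\mathcal G_{\mathfrak s}$ of cycles representing $U^{-N}$, using the basepoint-swap symmetry $(\Sigma,\boldsymbol\alpha,\boldsymbol\beta,w,z)\leftrightarrow(-\Sigma,\boldsymbol\beta,\boldsymbol\alpha,z,w)$. An elementary minimax lemma then bounds their difference by $\max_{\mathbf x\in\mathfrak G}|\mathrm{Gr}_1(\mathbf x)-\mathrm{Gr}_2(\mathbf x)|$, and this grading difference is identified with an Alexander-grading gap, which the rational-genus detection theorem bounds by $1-\chi(F)/p$. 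The point is that extracting both correction terms from a single filtered complex lets you genuinely compare them, rather than bounding each separately against a third quantity.
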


We find Theorem~\ref{thm:GenusBound} quite interesting because it unveils some topological information contained in Heegaard Floer homology of rational homology spheres. Such information is relatively rare in the literature comparing to the case of manifolds with positive $b_1$, where one can get useful information like Thurston norm \cite{OSzGenus} and fibration \cite{NiClosedF}.

Theorem~\ref{thm:GenusBound} is particularly useful when $Y$ is an L-space and the homology class contains a Floer simple knot.
Recall that a rational homology $3$--sphere $Y$ is an {\it L-space} if $\mathrm{rank}\widehat{HF}(Y)=|H_1(Y;\mathbb Z)|$. A rationally null-homologous knot $K$ in a $3$--manifold $Y$ is {\it Floer simple} if 
$\mathrm{rank}\widehat{HFK}(Y,K)=\mathrm{rank}\widehat{HF}(Y)$.

\begin{thm}\label{thm:SimpleL}
Suppose that $Y$ is an L-space, $K$ is a Floer simple knot in $Y$. If $K_1$ is another knot in $Y$ with $[K_1]=[K]\in H_1(Y;\mathbb Z)$, then
$$||K||\le ||K_1||.$$
\end{thm}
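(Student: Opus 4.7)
The plan is to deduce Theorem~\ref{thm:SimpleL} from Theorem~\ref{thm:GenusBound} by showing that a Floer simple knot in an L-space realizes the lower bound of Theorem~\ref{thm:GenusBound}. First I would apply Theorem~\ref{thm:GenusBound} to $K_1$: for any rational Seifert surface $F_1$ of $K_1$,
$$1+\frac{-\chi(F_1)}{|[\partial F_1]\cdot[\mu]|}\ge \max_{\mathfrak s\in\spin(Y)}\big\{d(Y,\mathfrak s+\mathrm{PD}[K_1])-d(Y,\mathfrak s)\big\}.$$
Since $[K_1]=[K]$ the right-hand side depends only on the common homology class. Taking the infimum over all $F_1$ yields
$$1+2\|K_1\|\ge \max_{\mathfrak s}\big\{d(Y,\mathfrak s+\mathrm{PD}[K])-d(Y,\mathfrak s)\big\}.$$

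Next, the crux of the argument is to prove the matching equality for the Floer simple knot $K$ itself, namely
$$1+2\|K\|=\max_{\mathfrak s}\big\{d(Y,\mathfrak s+\mathrm{PD}[K])-d(Y,\mathfrak s)\big\}.$$
My approach is to compute these correction-term differences directly from the knot Floer complex. Because $Y$ is an L-space and $K$ is Floer simple, $\widehat{HFK}(Y,K)$ has exactly $|H_1(Y;\mathbb Z)|$ generators, one in each relative $\spin$ structure $\underline{\mathfrak s}\in\relspin(Y,K)$, each surviving to $\widehat{HF}(Y)$. The large surgery formula of Ozsv\'ath--Szab\'o then expresses $d$ invariants of sufficiently positive surgeries along $K$ in terms of the Alexander gradings $A(\underline{\mathfrak s})$ of these generators; passing back to $Y$ via the surgery exact triangle, the difference $d(Y,\mathfrak s+\mathrm{PD}[K])-d(Y,\mathfrak s)$ on the right-hand side is, up to an additive constant, governed by the extremal Alexander gradings among relative $\spin$ structures projecting to $\mathfrak s$.

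The final ingredient is a topological identification: for a Floer simple knot, the extremal relative $\spin$ structures pair with a rational Seifert surface $F$ of $K$ in such a way that the extremal Alexander grading equals $\tfrac{1}{2}\big(1+\tfrac{-\chi(F)}{|[\partial F]\cdot[\mu]|}\big)$ for a surface $F$ realizing $\|K\|$. This is the step I expect to be the main obstacle, since it requires tracking the $\spin$ identifications through the surgery formula and verifying that the Alexander grading of the top nonzero class in $\widehat{HFK}$ is sharp with respect to the rational genus — essentially, that no ``cancellation'' can shrink the support of $\widehat{HFK}$ below what the rational genus predicts. Combining the two displayed inequalities, the equality for $K$ together with the Theorem~\ref{thm:GenusBound} bound for $K_1$ gives $2\|K\|\le 2\|K_1\|$, completing the proof.
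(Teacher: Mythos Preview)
Your overall strategy is correct and matches the paper: reduce Theorem~\ref{thm:SimpleL} to the statement that a Floer simple knot in an L-space realizes equality in Theorem~\ref{thm:GenusBound} (this is Proposition~\ref{prop:SimpleGenus} in the paper), and then compare with the inequality for $K_1$.

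Where you diverge is in the proof of that equality. Your proposed route---compute $d$-invariants via the large surgery formula and then ``pass back to $Y$ via the surgery exact triangle''---is not the paper's approach, and as written it is not a complete argument: the large surgery formula gives correction terms of $Y_p(K)$, not of $Y$, and you have not explained how to extract the differences $d(Y,\mathfrak s+\mathrm{PD}[K])-d(Y,\mathfrak s)$ from surgery data. The paper's argument is much more direct and uses no surgery at all. Since $Y$ is an L-space, the unique generator of $\widehat{HF}(Y,\mathfrak s)\cong\mathbb Z$ lies in Maslov grading $d(Y,\mathfrak s)$; since $K$ is Floer simple, this same generator is the unique generator of $\widehat{HFK}(Y,K,\xi_{\mathfrak s})$ for a unique relative Spin$^c$ structure $\xi_{\mathfrak s}$ lying over $\mathfrak s$. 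The conjugation symmetry $\widetilde J$ (Theorem~\ref{thm:RatSymm}(c) and Corollary~\ref{cor:Isom}) sends this generator to the generator over $J\mathfrak s+\mathrm{PD}[K]$ with a Maslov grading shift of exactly $A(\widetilde J\xi_{\mathfrak s})-A(\xi_{\mathfrak s})$. Hence
\[
d(Y,J\mathfrak s+\mathrm{PD}[K])-d(Y,\mathfrak s)=A(\widetilde J\xi_{\mathfrak s})-A(\xi_{\mathfrak s}),
\]
and taking the maximum over $\mathfrak s$, together with Corollary~\ref{cor:MaxJMin} and Theorem~\ref{thm:RatGenus}, gives the desired equality.

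Also, what you flag as the ``main obstacle''---that the top Alexander grading of $\widehat{HFK}$ is sharp with respect to the rational genus---is precisely Theorem~\ref{thm:RatGenus}, already available from \cite{NiLinkNorm}; it does not need to be reproved. The actual content you are missing is the link between correction-term differences of $Y$ and Alexander gradings, and that comes from the $\widetilde J$ symmetry, not from surgery.
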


An important class of L-spaces is lens spaces.
The question of computing $\Theta$ for lens spaces was first considered by Turaev \cite{TuFunc}. The lower bound given by Turaev (for any manifold) is always less than $1$, but the value of $\Theta$ can be much larger than $1$ even for lens spaces. For example, if $a\sim \frac p2$, using (\ref{eq:Lp1}) in Section~\ref{sect:Appl} we get a lower bound $\sim\frac p4$ for $\Theta(a)$ in $L(p,1)$. So Turaev's bound is not sharp for lens spaces. 

Hedden \cite{HedBerge} and Rasmussen \cite{RasBerge} observed that for any $1$--dimensional homology class in a lens space, there exists a knot in this homology class which is Floer simple. Let $U_0\cup U_1$ be a genus $1$ Heegaard splitting of a lens space $L(p,q)$, and let $D_0,D_1$ be meridian disks in $U_0,U_1$ such that $\partial D_0\cap\partial D_1$ consists of exactly $p$ points. A knot in $L(p,q)$ is called {\it simple} if it is either the unknot or the union of two arcs $a_0\subset D_0$ and $a_1\subset D_1$. Up to isotopy there is exactly one simple knot in each homology class in $H_1(L(p,q))$, and every simple knot is Floer simple.

Rasmussen \cite{RasBerge} conjectured that simple knots are genus minimizers in their homology classes, and verified this conjecture for dual Berge knots. In fact, he proved  that primitive knots in L-spaces with rational genus less than $\frac12$ are genus minimizing. As a consequence of our Theorem~\ref{thm:SimpleL}, we verify Rasmussen's conjecture in general. 

\begin{cor}
Simple knots in lens spaces are genus minimizers in their homology classes. 
\end{cor}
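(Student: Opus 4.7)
The plan is to derive the corollary directly from Theorem~\ref{thm:SimpleL}, once its hypotheses are verified for lens spaces. First, I would note that every lens space $L(p,q)$ is an L-space: a direct calculation from the standard genus-one Heegaard diagram yields $\mathrm{rank}\,\widehat{HF}(L(p,q)) = p = |H_1(L(p,q);\mathbb Z)|$, so the ambient-manifold hypothesis of Theorem~\ref{thm:SimpleL} is automatic.

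Next, I would invoke the observation of Hedden \cite{HedBerge} and Rasmussen \cite{RasBerge}, recalled in the paragraph preceding the corollary, that for every class $a \in H_1(L(p,q);\mathbb Z)$ there is a simple knot $K$ representing $a$, and that every such simple knot is Floer simple. Thus Theorem~\ref{thm:SimpleL} applies to $K$ inside the L-space $L(p,q)$, yielding $\|K\| \le \|K_1\|$ for every knot $K_1 \subset L(p,q)$ with $[K_1] = [K] = a$. Since $a$ was arbitrary, this is exactly the statement that simple knots in lens spaces are genus minimizers in their homology classes.

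There is no real obstacle here: all the genuine content has already been absorbed into Theorem~\ref{thm:SimpleL} (which in turn rests on the correction-term bound of Theorem~\ref{thm:GenusBound}) and into the Hedden--Rasmussen identification of simple knots as Floer simple. The corollary is a formal combination of these two ingredients and requires no additional Heegaard Floer input.
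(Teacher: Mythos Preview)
Your proposal is correct and matches the paper's approach exactly: the paper simply states the corollary as an immediate consequence of Theorem~\ref{thm:SimpleL}, having just recalled (via Hedden and Rasmussen) that lens spaces are L-spaces and that simple knots in them are Floer simple. No additional argument is given or needed.
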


\begin{rem}
Rasmussen also conjectured that simple knots are the {\bf unique} genus minimizers in their homology classes. As pointed out in \cite{HedBerge,RasBerge}, the uniqueness of genus minimizers in lens spaces would imply the Berge Conjecture on lens space surgeries \cite{Berge}.
\end{rem}

Unlike the Heegaard Floer bound for Thurston norm, our bound for $\Theta$ is not always sharp. For example, suppose $K\subset Y$ is a knot in a homology sphere such that the half degree of its Alexander polynomial is equal to its genus $g$. Let $p\ge 4g-2$ be an integer, $Y_p(K)$ be the manifold obtained by $p$--surgery on $K$, and $K'\subset Y_{p}(K)$ be the dual knot of the surgery. Then the lower bound given by Turaev on $\Theta([K'])$ in $Y_p(K)$ is $\frac{2g-1}p$, and can be obviously realized  \cite[Section~6.2]{TuFunc}. In this case, the bound given by Theorem~\ref{thm:GenusBound} is not always sharp (see Section~\ref{sect:Appl}). Nevertheless, using Heegaard Floer homology we will prove the following result.

\begin{prop}\label{prop:S3Surg}
Suppose that $K\subset S^3$ is a knot with genus $g$, and that $p\ge2g$ is an integer, then the dual knot $K'\subset S^3_p(K)$ is a genus minimizer in its homology class. Namely, $\Theta([K'])=\frac{2g-1}p$.
\end{prop}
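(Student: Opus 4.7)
The proof has two parts: a direct construction for the upper bound, and a Heegaard Floer argument for the lower bound. For the upper bound, I would take a minimal genus Seifert surface $F\subset S^3$ for $K$, with $-\chi(F)=2g-1$. Since $S^3\setminus\overset{\circ}{\nu}(K)=S^3_p(K)\setminus\overset{\circ}{\nu}(K')$, the surface $F$ sits in the complement of $K'$, with boundary the Seifert longitude $\lambda$ of $K$. The new meridian of $K'$ is $\mu'=p\mu+\lambda$, so $|[\partial F]\cdot[\mu']|=|\lambda\cdot(p\mu+\lambda)|=p$; hence $F$ is a rational Seifert surface for $K'$ witnessing $2\|K'\|\le(2g-1)/p$.

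For the lower bound, one first attempts to apply Theorem~\ref{thm:GenusBound} directly. For $p\ge 2g$, the Ozsv\'ath--Szab\'o surgery formula expresses the correction terms $d(S^3_p(K),\cdot)$ in terms of those of $L(p,1)$ with corrections governed by the $V$-invariants of $K$, and one can compute $\max_\mathfrak{s}\{d(Y,\mathfrak s+\mathrm{PD}[K'])-d(Y,\mathfrak s)\}$ in closed form. However, as anticipated in the paragraph preceding the proposition, this is not always enough: if all $V$-invariants of $K$ vanish in nonnegative indices (for instance when $\tau(K)\le 0$, as for $K=\overline{T(2,2k+1)}$), then the correction terms of $S^3_p(K)$ coincide with those of $L(p,1)$, and Theorem~\ref{thm:GenusBound} yields only a vacuous bound on $\Theta([K'])$.

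To handle the general case, I would instead use the Ozsv\'ath--Szab\'o large surgery formula, which for $p\ge 2g$ identifies each summand of $\widehat{HFK}(S^3_p(K),K')$ with a corresponding summand of $\widehat{HFK}(S^3,K)$ under an explicit matching of relative Spin$^c$ structures with integers. Since $\widehat{HFK}(S^3,K,g)\ne 0$ by Ozsv\'ath--Szab\'o's genus detection, there is a nontrivial class in $\widehat{HFK}(S^3_p(K),K',\xi_{\max})$ for the relative Spin$^c$ structure $\xi_{\max}$ corresponding to $j=g$. A rational-genus version of the Heegaard Floer adjunction bound (generalizing the null-homologous inequality $-\chi(F)\ge 2A_{\max}-1$ to the rationally null-homologous setting) then yields $-\chi(F)\cdot p\ge(2g-1)\cdot|[\partial F]\cdot[\mu']|$ for any rational Seifert surface $F$ of $K'$, so $2\|K'\|\ge(2g-1)/p$.

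The main obstacle is applying this rational adjunction inequality with the correct normalization: the grading shifts in the large surgery identification must be tracked precisely so that the top rational Alexander grading of $\widehat{HFK}(S^3_p(K),K')$ translates into exactly the bound $-\chi(F)\ge 2g-1$ for the natural rational Seifert surface, rather than an off-by-one variant that would miss the upper bound or contradict it.
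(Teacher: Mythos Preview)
Your upper bound is fine; the problem is the lower bound. What you establish is a lower bound for $\|K'\|$, not for $\Theta([K'])$. Theorem~\ref{thm:RatGenus} (the ``rational adjunction inequality'' you invoke) relates $\widehat{HFK}(S^3_p(K),K')$ to the minimal $-\chi(F)$ over rational Seifert surfaces \emph{for $K'$}. It says nothing about an arbitrary knot $K_1$ with $[K_1]=[K']$, because $\widehat{HFK}(S^3_p(K),K_1)$ can be completely different from $\widehat{HFK}(S^3_p(K),K')$. So your argument at best reproves $2\|K'\|=(2g-1)/p$, which is already implicit in the upper-bound step; it does not show $K'$ is a genus minimizer in its homology class. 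This is exactly why the paper warns that Theorem~\ref{thm:GenusBound} can fail to be sharp here and that extra work is needed: a bound on $\Theta$ must depend only on $Y$ and the homology class, not on the particular knot.

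The paper's route is genuinely different. It argues by contradiction: suppose $L'$ is a knot with $[L']=[K']$ and $\|L'\|<\|K'\|$, reverse the surgery along $L'$ to get a homology sphere $Z$ with $Z_p(L)\cong S^3_p(K)$ and $g(L)<g(K)$. A separate lemma pins down the induced map on $\spin$ as the identity on $\mathbb Z/p\mathbb Z$, so the $HF^+$ groups match Spin$^c$-structure by Spin$^c$-structure. From the Spin$^c$ structures $i=g(K)$ and $i=g(K)-1$ one first deduces $Z$ is an L-space, and then, via a short exact sequence involving $\widehat{HFK}(K,g(K))\ne0$, that $V_{g(K)-1}(K)=1$. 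With this in hand, the correction-term bound (\ref{eq:SurgBound}) at $i=g(K)-1$ gives $\Theta([K'])\ge(2g-1)/p$, contradicting the choice of $L'$. The crucial point is that the hypothetical competitor $L'$ is what forces $V_{g(K)-1}=1$, after which Theorem~\ref{thm:GenusBound} (which \emph{does} depend only on the homology class) becomes sharp. Your proposal never engages with a competitor knot, so it cannot close this gap.
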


This paper is organized as follows. In Section~\ref{sect:prelim} we will give the necessary background on Heegaard Floer homology. We will focus on the construction of the knot Floer homology of rationally null-homologous knots. In Section~\ref{sect:Symm}, we prove a symmetry relation in knot Floer homology. In Section~\ref{sect:Bound}, we prove Theorem~\ref{thm:GenusBound} by analyzing the knot Floer chain complex of rationally null-homologous knots. In Section~\ref{sect:Appl}, we apply Theorem~\ref{thm:GenusBound} to some examples, thus prove Theorem~\ref{thm:SimpleL} and Proposition~\ref{prop:S3Surg}.

\vspace{5pt}\noindent{\bf Acknowledgements.}\quad The first author wishes to thank Jacob Rasmussen for asking the question which motivated this work. The first author was
partially supported by an AIM Five-Year Fellowship, NSF grant
number DMS-1103976 and an Alfred P. Sloan Research Fellowship. The second author was supported by a Simons Postdoctoral Fellowship.

%%%%%
%%%%%
%%%%%
%%%%%
%%%%%

\section{Preliminaries}\label{sect:prelim}

\subsection{Correction terms in Heegaard Floer homology}

Heegaard Floer homology, introduced by
Ozsv\'ath and Szab\'o \cite{OSzAnn1}, is an invariant  for closed oriented Spin$^c$ $3$--manifolds $(Y,\mathfrak s)$,
  taking the form of a collection of related homology groups as  $\widehat{HF}(Y,\mathfrak s)$, $HF^{\pm}(Y,\mathfrak s)$, and $HF^\infty(Y,\mathfrak s)$.
There is a $U$--action on Heegaard Floer homology groups.
When $\mathfrak s$ is torsion, there is an absolute Maslov $\mathbb Q$--grading on the Heegaard Floer homology groups. The $U$--action decreases the grading by $2$.

For a rational homology $3$--sphere $Y$ with a Spin$^c$ structure $\mathfrak s$, $HF^+(Y,\mathfrak s)$ can be decomposed as the direct sum of two groups: the first group is the image of $HF^\infty(Y,\mathfrak s)\cong\mathbb Z[U,U^{-1}]$ in $HF^+(Y,\mathfrak s)$,
which is isomorphic to $\mathcal T^+=\mathbb Z[U,U^{-1}]/UZ[U]$, and its minimal absolute  $\mathbb{Q}$--grading is an invariant of $(Y,\mathfrak s)$, denoted by $d(Y,\mathfrak s)$, the {\it correction term} \cite{OSzAbGr}; the second group is the quotient modulo the above image and is denoted by $HF_{\mathrm{red}}(Y,\mathfrak s)$.  Altogether, we have $$HF^+(Y,\mathfrak s)=\mathcal{T}^+\oplus HF_{\mathrm{red}}(Y,\mathfrak s).$$

The correction term satisfies
\begin{equation}\label{eq:CorrSymm}
d(Y,\mathfrak s)=d(Y,J\mathfrak s),\qquad d(-Y,\mathfrak s)=-d(Y,\mathfrak s),
\end{equation}
where $J\co \spin(Y)\to\spin(Y)$ is the conjugation on $\spin(Y)$, and $-Y$ is $Y$ with the orientation reversed.

\subsection{Relative {\rm Spin$^c$} structures}

Let $M$ be a compact 3--manifold with boundary consisting of tori.
Let $v_1$ and $v_2$ be two nowhere vanishing
vector fields on $M$, whose restriction on each component of
$\partial M$ is the outward normal vector field.
We say $v_1$ and $v_2$ are {\it homologous}, if they are homotopic
in the complement of a ball in $M$, and the homotopy is through
nowhere vanishing vector fields which restrict to the outward normal vector field on $\partial M$. The homology classes of such vector fields
are called {\it relative {\rm Spin}$^c$ structures} on $M$, and
the set of all relative Spin$^c$ structures is denoted by
$\relspin(M,\partial M)$, which is an affine
space over $H^2(M,\partial M)$.

When $K$ is an oriented knot in a closed oriented 3--manifold $Y$,
let $M=Y\backslash\overset{\circ}{\nu}(K)$. Then we also denote
$\relspin(M,\partial M)$ by $\relspin(Y,K)$.

\begin{rem}
There are several different conventions in the literature for the boundary condition of vector fields representing a relative Spin$^c$ structure. In \cite{OSzLink,NiLinkNorm}, the restriction of the vector fields on the boundary is	tangent to the boundary. Our treatment in this paper is the one taken in \cite{OSzRatSurg}.
\end{rem}

Suppose $K$ is an oriented rationally null-homologous knot in a closed manifold $Y^3$,
$(\Sigma,\mbox{\boldmath${\alpha}$},\mbox{\boldmath${\beta}$},
w, z)$ is a doubly-pointed Heegaard
diagram associated to the pair $(Y,K)$. There is a map
$$\underline{\mathfrak s}_{w,z}\co\mathbb T_{\alpha}\cap\mathbb T_{\beta}\to\relspin(Y,K),$$
defined in \cite{OSzRatSurg}. We sketch the definition of
$\underline{\mathfrak s}_{w,z}$ as follows.

Let $f\co Y\to[0,3]$ be a Morse function corresponding to the
Heegaard diagram, $\nabla f$ is the gradient vector field
associated to $f$. Let $\gamma_{w}$ be the 
flowline of $\nabla f$ passing through $w$, which connects
the index-zero critical point to the index-three critical point.
Similarly, define $\gamma_{z}$. Suppose $\mathbf
x\in\mathbb T_{\alpha}\cap\mathbb T_{\beta}$, then
$\mbox{\boldmath${\gamma}$}_{\mathbf x}$ denotes the union of the
flowlines connecting index-one critical points to index-two
critical points, and passing through the points in $\mathbf x$.

We construct a nowhere vanishing vector field $v$. Outside a
neighborhood $\nu(\gamma_{w}\cup\gamma_{z}\cup\mbox{\boldmath${\gamma}$}_{\mathbf x})$, $v$ is identical
with $\nabla f$. Then one can extend $v$ over the balls
$\nu(\mbox{\boldmath${\gamma}$}_{\mathbf x})$. We can also
extend $v$ over $\nu(\gamma_{w}\cup\gamma_{
z})$, so that the closed orbits of $v$, which pass through $w$ and $z$, give the oriented knot
$K=\gamma_{z }-\gamma_{w}$. There may be many
different choices to extend $v$ over $\nu(\gamma_{
w}\cup\gamma_{z})$, we choose the extension as in
\cite[Subsection~2.4]{OSzRatSurg}.

Now we let $\underline{\mathfrak s}_{w,z}(\mathbf
x)$ be the relative Spin$^c$ structure given by
$v|_{Y\backslash\overset{\circ}{\nu}(K)}$. It is easy to check that
$\underline{\mathfrak s}_{w,z}$ is a well-defined
map.

Let $u$ be a vector field on $S^1\times D^2$ as described in \cite[Subsection~2.2]{OSzRatSurg}. More precisely, $u$ is the inward normal vector field on the boundary torus, $u$ is transverse to the meridian disks in the interior of $S^1\times D^2$,  and the core of $S^1\times D^2$ is a closed orbit of $u$. Given $\xi\in\relspin(Y,K)$, 
let $v$ be a vector field representing $\xi$, then we can glue $v$ and $u$ together to get a vector field on $Y$, which represents a Spin$^c$ structure on $Y$. Hence we get a map 
$$G_{Y,K}\co \relspin(Y,K)\to\mathrm{Spin}^c(Y).$$
We call $G_{Y,K}(\xi)$ the {\it underlying} Spin$^c$ structure of $\xi$. It is shown in \cite{OSzRatSurg} that
$$G_{Y,K}(\underline{\mathfrak s}_{w,z}(\mathbf x))=\mathfrak s_w(\mathbf x).$$

\subsection{Knot Floer homology of rationally null-homologous knots}

Suppose that $K$ is a rationally null-homologous knot in a closed $3$--manifold
$Y$. Let $$(\Sigma,\mbox{\boldmath${\alpha}$},
\mbox{\boldmath$\beta$},w,z)$$ be a doubly pointed Heegaard
diagram for $(Y,K)$. Fix a Spin$^c$ structure $\mathfrak s$ on $Y$ and let
$\xi\in\underline{\mathrm{Spin}}^c(Y,K)$ be a relative Spin$^c$
structure whose underlying Spin$^c$ structure is $\mathfrak s$. Let $CFK^{\infty}(Y,K,\xi)$ be an
abelian group freely generated by triples $[\mathbf x,i,j]$ with
$$\mathbf x\in\mathbb T_{\alpha}\cap\mathbb T_{\beta},\quad\mathfrak
s_w(\mathbf x)=\mathfrak s$$ and 
\begin{equation}\label{eq:Generator}
\underline{\mathfrak
s}_{w,z}(\mathbf x)+(i-j)PD[\mu]=\xi.
\end{equation}
The chain complex is endowed
with the differential
$$\partial^{\infty}[\mathbf x,i,j]=\sum_{\mathbf y\in\mathbb T_{\alpha}\cap\mathbb T_{\beta}}
\sum_{\{\phi\in\pi_2(\mathbf x,\mathbf
y)|\mu(\phi)=1\}}\#(\widehat{\mathcal M}(\phi))[\mathbf
y,i-n_w(\phi),j-n_z(\phi)].$$ The homology of
$(CFK^{\infty}(Y,K,\xi),\partial^{\infty})$ is denoted
$HFK^{\infty}(Y,K,\xi)$.

The grading $j$ gives a filtration on
$CFK^{0,*}(Y,K,\xi)$, the associated graded complex is denoted
$\widehat{CFK}(Y,K,\xi)$.

Given a knot $K$ in a rational homology sphere $Y$, let $F$ be a rational Seifert surface for $K$, then
there is an affine map $A\co \relspin(Y,K)\to\mathbb Q$ satisfying
\begin{equation}\label{eq:A}
A(\xi_1)-A(\xi_2)=\frac{\langle \xi_2-\xi_1,[F]\rangle}{|[\partial F]\cdot[\mu]|}.
\end{equation}
This map can be defined and determined by (\ref{eq:A}), once we fix the value of $A$ at a $\xi_0\in\relspin(Y,K)$ .

Let $$\mathcal B_{Y,K}=\left\{\xi\in\relspin(Y,K)\left|\:\widehat{HFK}(Y,K,\xi)\ne0\right.\right\}.$$ Let $$A_{\max}=\max\{A(\xi)|\:\xi\in\mathcal B_{Y,K}\}, \quad A_{\min}=\min\{A(\xi)|\:\xi\in\mathcal B_{Y,K}\}.$$
We reformulate \cite[Theorem~1.1]{NiLinkNorm} for knots as follows.

\begin{thm}\label{thm:RatGenus}
Suppose $K$ is a knot in a rational homology sphere $Y$, $F$ is a minimal genus rational Seifert surface for $K$, then
$$\frac{-\chi(F)+|[\partial F]\cdot[\mu]|}{|[\partial F]\cdot[\mu]|}=A_{\max}-A_{\min}.$$
\end{thm}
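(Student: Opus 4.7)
The plan is to establish the equality via two matching inequalities, using the adjunction-type bound for knot Floer homology on the one hand and Juh\'asz's non-vanishing theorem for sutured Floer homology on the other, following the strategy that is carried out in full generality for links in \cite{NiLinkNorm}. Throughout I will write $n = |[\partial F]\cdot[\mu]|$.

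For the upper bound $A_{\max} - A_{\min} \leq (-\chi(F)+n)/n$, the plan is to apply an adjunction-type inequality to any $\xi \in \mathcal{B}_{Y,K}$. Elements of $\mathcal{B}_{Y,K}$ with the same underlying Spin$^c$ structure differ by integer multiples of $\mathrm{PD}[\mu]$, so controlling the range of $\langle \xi, [F]\rangle$ for $\xi$ supporting nontrivial $\widehat{HFK}$ controls $A_{\max} - A_{\min}$ via (\ref{eq:A}). The standard pairing/adjunction argument on the knot Floer chain complex, applied to $F$, yields a bound of the form $\langle \xi_1 - \xi_2, [F]\rangle \leq -\chi(F) + n$, which after dividing by $n$ is exactly the claimed inequality.

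For the lower bound, the plan is to identify the extremal Alexander summands of $\widehat{HFK}$ with sutured Floer homology and to invoke non-vanishing for taut decompositions. Let $M = Y\setminus\overset{\circ}{\nu}(K)$ and equip $\partial M$ with two oppositely oriented parallel copies of $\partial F$ as sutures; by Gabai's sutured manifold theory, the surface decomposition of $(M,\gamma)$ along $F$ produces a taut sutured manifold $(M',\gamma')$ because $F$ is minimal genus. Juh\'asz's non-vanishing theorem then gives $SFH(M',\gamma')\neq 0$, and the identification from \cite{NiLinkNorm} between $SFH(M',\gamma')$ and the top Alexander summand $\bigoplus_{\xi:A(\xi)=A_{\max}}\widehat{HFK}(Y,K,\xi)$ shows that the extremum $A_{\max}$ is attained precisely at the value predicted by $F$. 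The symmetric argument for $A_{\min}$ (either by reversing the orientation of $F$ or by applying the conjugation symmetry on relative Spin$^c$ structures) then pins down $A_{\max}-A_{\min}$ as $(-\chi(F)+n)/n$.

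The main obstacle is bookkeeping of the normalizations. The denominator $n$ on the left-hand side comes directly from (\ref{eq:A}), while the offset $+n$ in the numerator arises from the $n$ longitudinal boundary circles of $F$, which must be capped off to pass between the rational Seifert surface and the decomposing surface in the sutured model. Aligning these constants so that the affine coordinate $A$ lines up on the nose with the Alexander grading used in \cite{NiLinkNorm} is the only non-mechanical step, after which the two inequalities combine to give the claimed equality.
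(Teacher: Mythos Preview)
The paper does not actually prove this theorem: it is introduced with the sentence ``We reformulate \cite[Theorem~1.1]{NiLinkNorm} for knots as follows'' and no argument is given beyond that citation. So there is no in-paper proof to compare against; the paper simply imports the result.

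Your outline is the correct strategy and is, in substance, what \cite{NiLinkNorm} does: one inequality comes from an adjunction-type bound, and equality is forced by identifying the extremal Alexander summand with the sutured Floer homology of the complement decomposed along a minimal-genus $F$, which is nonzero by Juh\'asz's theorem because the decomposition is taut. The only caveat is that in \cite{NiLinkNorm} the passage to the rationally null-homologous case is handled by cabling (to reduce to a null-homologous link), rather than by running the sutured argument directly on the rational Seifert surface; this is exactly where the normalization bookkeeping you flag (the extra $+n$ from the $n$ boundary circles, and the denominator $n$ from \eqref{eq:A}) has to be tracked. Your sketch is otherwise sound.
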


Suppose $K\subset Y$ is a rationally null-homologous knot. We say that $K$ is {\it rationally fibered}, if the complement of $K$ is a surface bundle over $S^1$, and the fiber is a rational Seifert surface for $K$. 

\begin{thm}\label{thm:RatFiber}
Suppose $K\subset Y$ is a rationally null-homologous knot, $F$ is a rational Seifert surface for $K$. Then the complement of $K$ fibers over $S^1$ with fiber $F$ if and only if the group
$$\bigoplus_{\xi\in\relspin(Y,K), A(\xi)=A_{\max}}\widehat{HFK}(Y,K,\xi)$$
is isomorphic to $\mathbb Z$.
\end{thm}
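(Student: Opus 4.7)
The plan is to reduce Theorem~\ref{thm:RatFiber} to Juh\'asz's surface decomposition formula combined with Ni's sutured Floer detection of product sutured manifolds (which in turn detects fibrations). Let $M = Y \setminus \overset{\circ}{\nu}(K)$, regarded as a balanced sutured manifold $(M, \gamma)$ whose suture $\gamma$ consists of two oppositely oriented meridional curves on $\partial\nu(K)$. There is a canonical Spin$^c$-graded isomorphism
\[
\widehat{HFK}(Y,K,\xi) \cong SFH(M, \gamma, \mathfrak{t}(\xi)),
\]
identifying relative Spin$^c$ structures on $(Y,K)$ with relative Spin$^c$ structures on $(M,\gamma)$. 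Under this identification, the affine function $A$ of (\ref{eq:A}) records, up to a constant, the evaluation $\langle c_1(\mathfrak{t}), [F,\partial F]\rangle / (2|[\partial F]\cdot[\mu]|)$.

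Next I would decompose $(M,\gamma)$ along the rational Seifert surface $F$ to obtain a balanced sutured manifold $(M',\gamma')$. Since $\partial F$ consists of $|[\partial F]\cdot[\mu]|$ coherently oriented parallel curves, a small isotopy makes $F$ a groomed decomposing surface whose boundary meets the two meridional sutures consistently; Juh\'asz's surface decomposition formula then yields
\[
SFH(M', \gamma') \;\cong\; \bigoplus_{\xi \in \mathcal O_F} \widehat{HFK}(Y,K,\xi),
\]
where $\mathcal O_F$ is the set of outermost relative Spin$^c$ structures with respect to $F$. Because outermost means the evaluation on $[F,\partial F]$ is maximal, (\ref{eq:A}) shows that $\mathcal O_F = \{\xi : A(\xi) = A_{\max}\}$, so the right-hand side is exactly the group appearing in the theorem.

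To finish, apply the fibration/product detection theorem: $(M',\gamma')$ is a product sutured manifold $F \times I$ if and only if $SFH(M',\gamma') \cong \mathbb{Z}$, provided $(M',\gamma')$ is taut, and $(M',\gamma')$ is a product sutured manifold if and only if $M$ fibers over $S^1$ with fiber $F$. Tautness of $(M',\gamma')$ is equivalent to $F$ being a minimal-genus rational Seifert surface, which we may always assume by Theorem~\ref{thm:RatGenus}. Combining these equivalences proves the "only if" direction; the "if" direction is the standard observation that in a fibered Heegaard diagram adapted to the monodromy, there is a unique generator in the extremal Alexander grading.

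The main obstacle in this plan is the careful verification that $F$ can be arranged as a valid groomed decomposing surface for the sutured manifold $(M,\gamma)$, and that the resulting $\mathcal O_F$ precisely matches the top $A$-grading under the affine identification of (\ref{eq:A}) — this is where the rational (as opposed to integral) nature of $F$ requires some bookkeeping, since $\partial F$ wraps around $\partial\nu(K)$ several times and the Spin$^c$ pairing must be normalized by $|[\partial F]\cdot[\mu]|$. Once this matching is established, the theorem follows formally from the cited sutured Floer machinery.
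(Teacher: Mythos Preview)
Your approach via Juh\'asz's surface decomposition formula and the product-detection theorem for sutured Floer homology is correct and is the natural modern framework for results of this type. The paper, however, takes a different and much shorter route: it simply invokes the null-homologous case already established in \cite{NiFibred}, together with \cite[Proposition~5.15]{NiLinkNorm}, and reduces the rationally null-homologous situation to the null-homologous one via the observation that $K$ is rationally fibered if and only if any cable of $K$ is rationally fibered. The cabling trick converts the rational Seifert surface of $K$ into an honest Seifert surface of a suitable cable, thereby sidestepping exactly the bookkeeping you flagged as the main obstacle (arranging $\partial F$ as a groomed decomposing surface when $|[\partial F]\cdot[\mu]|>1$ and matching $\mathcal O_F$ with the top $A$-grading). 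Your route is more self-contained within the sutured Floer framework; the paper's route is quicker given the cited results as black boxes.

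Two small corrections. First, you have the labels ``if'' and ``only if'' swapped in your last paragraph: the fibered-Heegaard-diagram argument showing a unique extremal generator is the \emph{only if} direction (fibered $\Rightarrow$ group $\cong\mathbb Z$), while the product-detection step is what gives the hard \emph{if} direction ($\mathbb Z\Rightarrow$ fibered). Second, tautness of $(M',\gamma')$ is not equivalent merely to $F$ being of minimal genus; it also requires $M'$ (hence $M$) to be irreducible, a hypothesis implicit in both \cite{NiFibred} and the product-detection theorem you cite.
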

\begin{proof}
This follows from \cite{NiFibred}, \cite[Proposition~5.15]{NiLinkNorm}, and the fact that a knot is rationally fibered if and only if any of its cables is rationally fibered.
\end{proof}

\subsection{Rational domains and the relative rational bigrading}

When $\mathfrak s$ is a torsion Spin$^c$ structure over $Y$, as in
Ozsv\'ath--Szab\'o \cite{OSzKnot} there is an absolute $\mathbb
Q$--grading on $CFK^{\infty}(Y,K,\xi)$ and the induced complexes.
Let $\widehat{HFK}_d(Y,K,\xi)$ be the summand of
$\widehat{HFK}(Y,K,\xi)$ at the absolute grading $d$.

We first recall Lee--Lipshitz's construction of the relative $\mathbb Q$--grading \cite{LL}. Suppose $D_1,\dots,D_N$ are closures of the components of $\Sigma-\mbox{\boldmath${\alpha}$}-
\mbox{\boldmath$\beta$}$, thought of as $2$--chains. Suppose $\psi=\sum_ia_iD_i$ for some rational numbers $a_i$, and let $\partial_{\alpha}\psi$ be the intersection of $\partial \psi$ with $\mbox{\boldmath${\alpha}$}$, then $\partial\partial_{\alpha}\psi$ is a rational linear combination of intersection points between $\alpha$ and $\beta$ curves. We say $\psi$ is a {\it rational domain} connecting $\mathbf x=(x_1,\dots,x_g)\in\mathbb T_{\alpha}\cap\mathbb T_{\beta}$ to $\mathbf y=(y_1,\dots,y_g)\in\mathbb T_{\alpha}\cap\mathbb T_{\beta}$, if $$\partial\partial_{\alpha}\psi=y_1+\cdots+y_g-(x_1+\cdots+x_g).$$ If $\psi=\sum_ia_iD_i$ is a  rational domain connecting $\mathbf x$ to $\mathbf y$ with $n_w(\psi)=0$, then we define the Maslov index \footnote{Unfortunately, we use $\mu$ to denote both the Maslov index of a rational domain and the meridian of a knot. This should not cause confusion in our current paper.}
$$\mu(\psi)=\sum_ia_i\big(e(D_i)+n_{\mathbf x}(D_i)+n_{\mathbf y}(D_i)\big),$$
where $e(D_i)$ is the Euler measure of $D_i$ as defined by Lipshitz \cite{Lip}.

The following lemma is contained in the last paragraph of \cite[Section~2]{LL}.

\begin{lem}[Lee--Lipshitz]\label{lem:LL}
Suppose $\psi$ is a rational domain connecting $\mathbf x$ to $\mathbf y$ with $n_w(\psi)=0$, then 
$$\mathrm{Gr}(\mathbf x)-\mathrm{Gr}(\mathbf y)=\mu(\psi).$$
\end{lem}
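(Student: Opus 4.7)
The plan is to reduce the rational case to the classical integer case by writing $\psi$ as the domain of an honest Whitney disk plus a rational periodic correction, and then controlling the correction with the first Chern class formula.

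Because $\mathbf x$ and $\mathbf y$ are both generators of $CFK^{\infty}(Y,K,\xi)$, they share the underlying Spin$^c$ structure $\mathfrak s_w(\mathbf x)=\mathfrak s_w(\mathbf y)=\mathfrak s$, so there is an honest Whitney disk $\phi\in\pi_2(\mathbf x,\mathbf y)$ with integer domain $D=D(\phi)$; note however that $n_w(\phi)$ need not vanish. For this $\phi$, Lipshitz's combinatorial Maslov-index formula reads $\mu(\phi)=e(D)+n_{\mathbf x}(D)+n_{\mathbf y}(D)$, which is precisely the Lee--Lipshitz expression $\mu(D)$, and the standard link between Maslov index and absolute $\mathbb Q$-grading gives $\mathrm{Gr}(\mathbf x)-\mathrm{Gr}(\mathbf y)=\mu(\phi)-2n_w(\phi)=\mu(D)-2n_w(\phi)$.

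Now the rational $2$-chain $\mathcal P:=\psi-D$ satisfies $\partial\partial_\alpha\mathcal P=0$, so it is a rational periodic domain, and $n_w(\mathcal P)=-n_w(\phi)$. The expression defining $\mu$ is $\mathbb Q$-linear in the chain once the endpoints $\mathbf x,\mathbf y$ are held fixed, so $\mu(\psi)-\mu(D)=e(\mathcal P)+n_{\mathbf x}(\mathcal P)+n_{\mathbf y}(\mathcal P)$. It therefore suffices to show the right-hand side equals $2n_w(\mathcal P)$. For this I invoke the first Chern class formula (extended $\mathbb Q$-linearly from integer periodic domains), applied at both $\mathbf x$ and $\mathbf y$: $e(\mathcal P)+2n_{\mathbf x}(\mathcal P)-2n_w(\mathcal P)=\langle c_1(\mathfrak s),H(\mathcal P)\rangle=e(\mathcal P)+2n_{\mathbf y}(\mathcal P)-2n_w(\mathcal P)$. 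Averaging these two equalities and using that $\mathfrak s$ is torsion on the rational homology sphere $Y$ (so the right-hand side vanishes) yields $e(\mathcal P)+n_{\mathbf x}(\mathcal P)+n_{\mathbf y}(\mathcal P)=2n_w(\mathcal P)$. Combining, $\mu(\psi)=\mu(D)-2n_w(\phi)=\mathrm{Gr}(\mathbf x)-\mathrm{Gr}(\mathbf y)$.

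The main potential obstacle is justifying the rational extension of the Chern class formula, but this follows from $\mathbb Q$-linearity since any rational periodic domain is a rational multiple of an integer periodic domain. A secondary bookkeeping subtlety is matching the sign conventions linking $\mu(\phi)$, $n_w(\phi)$, and the Maslov grading to the conventions used in Ozsv\'ath--Szab\'o's definition of the absolute $\mathbb Q$-grading on $CFK^{\infty}$.
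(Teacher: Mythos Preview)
The paper does not prove this lemma itself; it simply attributes it to the last paragraph of \cite[Section~2]{LL}. Your argument, however, has a genuine gap that is worth pointing out, because it misses precisely the case the paper needs.

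You begin by asserting that $\mathbf x$ and $\mathbf y$ are both generators of $CFK^{\infty}(Y,K,\xi)$ and hence share the underlying Spin$^c$ structure, which lets you produce an honest Whitney disk $\phi\in\pi_2(\mathbf x,\mathbf y)$. But the lemma as stated carries no such hypothesis, and in the paper's application (the proof of Theorem~\ref{thm:RatSymm}(c)) it is used with $\mathbf y_1$ representing $\widetilde J\xi$ and $\mathbf x_1$ representing $\xi$; by Lemma~\ref{lem:Js+K} these have underlying Spin$^c$ structures $J\mathfrak s+\mathrm{PD}[K]$ and $\mathfrak s$ respectively, which are generally distinct. In that situation no integer Whitney disk from $\mathbf x$ to $\mathbf y$ exists, and your decomposition $\psi=D+\mathcal P$ cannot be made.

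What your argument does correctly establish is the same--Spin$^c$ case, which indeed reduces to Lipshitz's index formula plus a periodic correction. The substance of the Lee--Lipshitz result is exactly the cross--Spin$^c$ case: one must show that the rational-domain Maslov index agrees with the difference of Ozsv\'ath--Szab\'o absolute $\mathbb Q$--gradings, which are defined via cobordisms rather than via disks in the symmetric product. Lee--Lipshitz do this by a covering-space argument. Your Chern-class computation for rational periodic domains is correct and shows that $\mu(\psi)$ is independent of the choice of rational domain with $n_w=0$, but that alone does not tie $\mu(\psi)$ to $\mathrm{Gr}$ when no integer disk is available.
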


There is a similar formula for the relative Alexander grading.

\begin{lem}\label{lem:RelAlex}
Suppose $\psi$ is a rational domain connecting $\mathbf x$ to $\mathbf y$, then
$$A(\mathbf x)-A(\mathbf y)=n_{z}(\psi)-n_{w}(\psi).$$
\end{lem}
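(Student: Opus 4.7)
The plan is to reduce the identity to the known Ozsv\'ath--Szab\'o formula for the relative Spin$^c$ structure difference across a Whitney disk, using linearity of both sides of the identity in $\psi$.

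\textbf{Step 1 (well-definedness).} First I would show that the right-hand side $n_z(\psi)-n_w(\psi)$ depends only on the endpoints $\mathbf x$ and $\mathbf y$. If $\psi_1,\psi_2$ are two rational domains with the same endpoints, their difference $P=\psi_1-\psi_2$ is a rational periodic domain. Capping the $\alpha$- and $\beta$-parts of $\partial P$ by compressing disks in the two handlebodies of the Heegaard splitting presents $P$ as a rational $2$-cycle in $Y$. Since $Y$ is a rational homology $3$-sphere, $H_2(Y;\mathbb Q)=0$, so $[P]=0$ in $Y$ and hence $[P]\cdot[K]=0$. The oriented knot $K$ meets $\Sigma$ transversely at $w$ and $z$ with opposite signs (it exits one handlebody at one basepoint and re-enters at the other), so $[P]\cdot[K]=\pm(n_w(P)-n_z(P))$. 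This forces $n_z(P)=n_w(P)$, as desired.

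\textbf{Step 2 (Whitney disk case).} With well-definedness in hand, when $\mathfrak s_w(\mathbf x)=\mathfrak s_w(\mathbf y)$ there exists a Whitney disk $\phi\in\pi_2(\mathbf x,\mathbf y)$, and \cite{OSzRatSurg} gives
$$\underline{\mathfrak s}_{w,z}(\mathbf y)-\underline{\mathfrak s}_{w,z}(\mathbf x)=(n_w(\phi)-n_z(\phi))\,\mathrm{PD}[\mu].$$
Substituting into the affine defining relation \eqref{eq:A} for $A$, together with the evaluation $\langle \mathrm{PD}[\mu],[F]\rangle=\pm|[\partial F]\cdot[\mu]|$ (the sign being fixed by the orientation conventions on $F$, $K$, and $\mu$), yields $A(\mathbf x)-A(\mathbf y)=n_z(\phi)-n_w(\phi)$. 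By Step~1 this equals $n_z(\psi)-n_w(\psi)$ for any rational $\psi$ with the same endpoints.

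\textbf{Step 3 (general case).} When $\mathfrak s_w(\mathbf x)\neq\mathfrak s_w(\mathbf y)$ no Whitney disk connects $\mathbf x$ to $\mathbf y$, and one must work directly with the rational domain. I would argue by scaling: choose $N\ge 1$ with $N\psi$ integral, extend the Ozsv\'ath--Szab\'o vector-field construction of $\underline{\mathfrak s}_{w,z}$ to this integer $2$-chain, and obtain the scaled identity
$$N\bigl(\underline{\mathfrak s}_{w,z}(\mathbf y)-\underline{\mathfrak s}_{w,z}(\mathbf x)\bigr)=(n_w(N\psi)-n_z(N\psi))\,\mathrm{PD}[\mu]$$
in $H^2(M,\partial M;\mathbb Z)$. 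Dividing rationally by $N$ and substituting into \eqref{eq:A} completes the argument. The main obstacle is precisely this last step: the vector-field construction of $\underline{\mathfrak s}_{w,z}$ needs to be adapted to a $2$-chain whose $\alpha$-boundary is $N(\mathbf y-\mathbf x)$ rather than a single intersection-point difference, and one must verify that the resulting relative Spin$^c$ difference scales linearly in $N$. An alternative --- perhaps more transparent --- route is to directly realize $\underline{\mathfrak s}_{w,z}(\mathbf y)-\underline{\mathfrak s}_{w,z}(\mathbf x)$ as the Poincar\'e dual of the rational $1$-chain in $M$ obtained by pushing $\partial_{\alpha}\psi$ through the handlebody $H_0$ along compressing disks, and compute its pairing with $[F]$ diagrammatically.
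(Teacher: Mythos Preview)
Your Steps~1 and~2 are fine, but Step~3 contains a real gap, and this is precisely the case that matters in the paper (in the proof of Theorem~\ref{thm:RatSymm}(c) the two generators lie over \emph{distinct} underlying Spin$^c$ structures $\mathfrak s$ and $J\mathfrak s+\mathrm{PD}[K]$). The displayed ``scaled identity''
\[
N\bigl(\underline{\mathfrak s}_{w,z}(\mathbf y)-\underline{\mathfrak s}_{w,z}(\mathbf x)\bigr)=(n_w(N\psi)-n_z(N\psi))\,\mathrm{PD}[\mu]
\]
is simply false when $\mathfrak s_w(\mathbf x)\ne\mathfrak s_w(\mathbf y)$: the left side is a class in $H^2(M,\partial M)$ that need not be any multiple of $\mathrm{PD}[\mu]$, so no amount of scaling or vector-field bookkeeping will produce that equation. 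The correct general identity, coming from \cite[Lemma~2.19]{OSzAnn1}, is $\underline{\mathfrak s}_{w,z}(\mathbf y)-\underline{\mathfrak s}_{w,z}(\mathbf x)=\mathrm{PD}[\partial\psi]$, with $\partial\psi$ viewed as a rational $1$--cycle in the knot complement.

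The paper's proof uses exactly this and dispenses with all three of your steps in one line. Plugging $\mathrm{PD}[\partial\psi]$ into \eqref{eq:A} gives
\[
A(\mathbf x)-A(\mathbf y)=\frac{[\partial\psi]\cdot[F]}{|[\partial F]\cdot[\mu]|},
\]
which is the rational linking number of $\partial\psi$ with $K$; since $\psi$ is itself a rational $2$--chain bounding $\partial\psi$, that linking number is the intersection $[\psi]\cdot[K]=n_z(\psi)-n_w(\psi)$. Your ``alternative route'' at the end of Step~3 is essentially gesturing at this argument --- it is the whole proof, not a fallback.
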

\begin{proof}
Let $F$ be a rational Seifert surface for $K$.
By (\ref{eq:A}) and \cite[Lemma~2.19]{OSzAnn1}, \begin{eqnarray*}
A(\mathbf x)-A(\mathbf y)&=&\frac{\langle\mathrm{PD}[\partial \psi],[F]\rangle}{|[\partial F]\cdot[\mu]|}\\
&=&\frac{[\partial\psi]\cdot[F]}{|[\partial F]\cdot[\mu]|}
\end{eqnarray*}
which is the rational linking number between $\partial\psi$ and $K$. This linking number can also be computed by
$[\psi]\cdot[K]=n_{z}(\psi)-n_{w}(\psi)$.
\end{proof}

%%%%%
%%%%%
%%%%%
%%%%%
%%%%%

\section{Symmetries in knot Floer homology}\label{sect:Symm}

Suppose that $K$ is a rationally null-homologous knot in a $3$--manifold $Y$.
Let $$\Gamma_1=(\Sigma,\mbox{\boldmath${\alpha}$},
\mbox{\boldmath$\beta$},w,z)$$ be a doubly pointed Heegaard
diagram for $(Y,K)$. Then
$$\Gamma_2=(-\Sigma,\mbox{\boldmath$\beta$},\mbox{\boldmath${\alpha}$},
z,w)$$ is also a Heegaard diagram for $(Y,K)$. We call $\Gamma_2$ the {\it dual diagram} of $\Gamma_1$. Let $\underline{\mathfrak s}^i_{w,z}(\mathbf x)$ be the associated relative Spin$^c$ structure for the diagram $\Gamma_i$.
We define a map $\widetilde J\co \mathcal B_{Y,K}\to \relspin(Y,K)$ as follows. If $$\underline{\mathfrak s}^1_{w,z}(\mathbf x)=\xi,$$ for some $\mathbf x$,
then define $$\widetilde J\xi=\widetilde J_{\Gamma_1}\xi=\underline{\mathfrak s}^2_{z,w}(\mathbf x).$$

\begin{lem}\label{lem:JInvariance}
The map $\widetilde J$ does not depend on the diagram $\Gamma_1$.
\end{lem}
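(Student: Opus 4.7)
The plan is to show that $\widetilde J \xi$ admits an intrinsic description, namely a relative analogue of the conjugation $J$ on $\spin(Y)$ obtained by reversing the representing vector field. Since such a description makes no reference to any Heegaard diagram, the lemma follows at once, and well-definedness of $\widetilde J$ as a function (rather than a multi-valued relation on $\mathcal B_{Y,K}$) is automatic.

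First I would compare the vector fields produced by the two diagrams. The dual diagram $\Gamma_2=(-\Sigma, \mbox{\boldmath$\beta$}, \mbox{\boldmath${\alpha}$}, z, w)$ is precisely the diagram associated with the reversed Morse function $3-f$, whose gradient is $-\nabla f$. The unoriented flowlines $\gamma_w$, $\gamma_z$, and $\mbox{\boldmath${\gamma}$}_{\mathbf x}$ are the same arcs as for $f$, only traversed in the opposite direction. The simultaneous swap of basepoints $w\leftrightarrow z$ then ensures that the oriented closed orbit $\gamma_w^{(3-f)}-\gamma_z^{(3-f)}$ agrees with the oriented knot $K=\gamma_z-\gamma_w$ coming from $\Gamma_1$. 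Consequently, if $v$ is a vector field constructed from $\Gamma_1$ as in \cite[Subsection~2.4]{OSzRatSurg} representing $\underline{\mathfrak s}^1_{w,z}(\mathbf x)=\xi$, an analogous construction for $\Gamma_2$ yields a vector field $v'$ representing $\underline{\mathfrak s}^2_{z,w}(\mathbf x)=\widetilde J\xi$ such that $v'=-v$ outside the neighborhood $\nu(\gamma_w\cup\gamma_z\cup\mbox{\boldmath${\gamma}$}_{\mathbf x})$, and such that the standard $S^1\times D^2$ model used on $\mathrm{Nd}(\gamma_w\cup\gamma_z)$ is transformed into its own reversal.

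Next I would define an intrinsic involution $\bar J\co\relspin(Y,K)\to\relspin(Y,K)$ by sending the class of $v$ to the class of $-v$, corrected canonically in a collar of $\partial\nu(K)$ to restore the outward-normal boundary condition required by the convention of \cite{OSzRatSurg}. Using the vector-field comparison above, I would verify that $\widetilde J_{\Gamma_1}(\xi)=\bar J(\xi)$ for every $\xi\in\mathcal B_{Y,K}$, and since $\bar J$ is defined using only the pair $(Y,K)$, this forces $\widetilde J_{\Gamma_1}$ to be independent of $\Gamma_1$.

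The main technical point is specifying the canonical collar modification: homotopy classes of nowhere-vanishing vector fields on $T^2\times[0,1]$ interpolating between inward and outward normal form a nontrivial torsor, so one must fix a standard model (for instance a $180^\circ$ rotation in a framing of the collar) and check that the resulting $\bar J$ is independent of auxiliary choices. Once this is settled, matching $v'$ with the canonical modification of $-v$ in a neighborhood of $\gamma_w\cup\gamma_z$ is a routine check, following from the built-in symmetry of the $S^1\times D^2$ model of \cite[Subsection~2.4]{OSzRatSurg} under reversal of ambient orientation. As a possible alternative, if making $\bar J$ intrinsic becomes delicate, one can fall back on showing directly that $\widetilde J_{\Gamma_1}$ is invariant under the three Heegaard moves (isotopy, handle slide, stabilization) relating any two doubly-pointed diagrams for $(Y,K)$, since the standard bijection of generators induced by each such move commutes with both $\underline{\mathfrak s}^1_{w,z}$ and $\underline{\mathfrak s}^2_{z,w}$.
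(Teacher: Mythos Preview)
Your proposal is plausible but takes a genuinely different route from the paper. The paper's proof is the ``fallback'' you mention at the end: any two doubly-pointed diagrams $\Gamma_1,\Gamma_1'$ for $(Y,K)$ are connected by isotopies, handleslides, and stabilizations; the dual diagrams $\Gamma_2,\Gamma_2'$ are then connected by the corresponding moves, and the standard invariance proof for $\widehat{HFK}(Y,K,\xi)$ applied simultaneously to both chains of moves forces $\widetilde J_{\Gamma_1}=\widetilde J_{\Gamma_1'}$. This is quick and uses only machinery already in place.

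Your primary approach --- identifying $\widetilde J$ with an intrinsic relative conjugation $\bar J$ obtained by reversing the vector field and correcting near the boundary --- is more conceptual and, if carried out, yields more: it extends $\widetilde J$ to all of $\relspin(Y,K)$ and explains \emph{what} the map is, not just that it is well-defined. The cost is exactly the technical point you flag: fixing a canonical interpolation in the collar so that $\bar J$ is unambiguous, and then checking that the $S^1\times D^2$ model of \cite{OSzRatSurg} really is taken to itself under the combined orientation reversal and basepoint swap. These are genuine checks (the collar torsor is $H^1(T^2)\cong\mathbb Z^2$, so a wrong choice shifts $\bar J$ by a multiple of $\mathrm{PD}[\mu]$ or $\mathrm{PD}[\lambda]$), and you would need to pin them down rather than assert them. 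The paper sidesteps all of this by never attempting an intrinsic description.
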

\begin{proof}
This follows from the standard procedure of proving the invariance of $\widehat{HFK}(Y,K,\xi)$. Suppose $\Gamma_1,\Gamma_1'$ are two different diagrams for $(Y,K)$, then they are connected by the following types of moves:\newline
$\bullet$ isotopies of the $\mbox{\boldmath${\alpha}$}$ and the $\mbox{\boldmath${\beta}$}$ without crossing $w,z$,\newline
$\bullet$ handleslides amongst the $\mbox{\boldmath${\alpha}$}$ or the $\mbox{\boldmath${\beta}$}$,\newline
$\bullet$ stabilizations.\newline
Then the dual diagrams $\Gamma_2$ and $\Gamma_2'$ are also related by the corresponding moves. Tracing these moves, the proof of the invariance of $\widehat{HFK}(Y,K,\xi)$ implies that  $\widetilde J_{\Gamma_1}=\widetilde J_{\Gamma_1'}$.
\end{proof}

\begin{lem}\label{lem:Js+K}
Suppose $\xi\in\mathcal B_{Y,K}$, then
$$G_{Y,K}(\widetilde J\xi)=JG_{Y,K}(\xi)+\mathrm{PD}[K].$$
\end{lem}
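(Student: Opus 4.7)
The plan is to reduce the lemma to two classical identities on absolute Spin$^c$ structures applied to the dual pair $(\Gamma_1,\Gamma_2)$. By Lemma~\ref{lem:JInvariance} we may work with any convenient diagram; pick $\mathbf x$ with $\underline{\mathfrak s}^1_{w,z}(\mathbf x)=\xi$, so that $\widetilde J\xi=\underline{\mathfrak s}^2_{z,w}(\mathbf x)$. The formula $G_{Y,K}(\underline{\mathfrak s}_{w,z}(\mathbf x))=\mathfrak s_w(\mathbf x)$ recalled from \cite{OSzRatSurg} is a general statement valid for \emph{any} doubly-pointed Heegaard diagram: filling a representing vector field with $u$ yields the absolute Spin$^c$ structure associated to the \emph{first} basepoint. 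Applying it both to $\Gamma_1$ and to $\Gamma_2$ (whose first basepoint is $z$) gives $G_{Y,K}(\xi)=\mathfrak s_w^{\Gamma_1}(\mathbf x)$ and $G_{Y,K}(\widetilde J\xi)=\mathfrak s_z^{\Gamma_2}(\mathbf x)$, so the lemma becomes the purely absolute identity
\[
\mathfrak s_z^{\Gamma_2}(\mathbf x)\;=\;J\mathfrak s_w^{\Gamma_1}(\mathbf x)+\mathrm{PD}[K].
\]

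I would then split this into two standard steps. The singly pointed diagrams $(\Sigma,\boldsymbol{\alpha},\boldsymbol{\beta},z)$ and $(-\Sigma,\boldsymbol{\beta},\boldsymbol{\alpha},z)$ obtained from $\Gamma_1$ and $\Gamma_2$ by retaining only $z$ as basepoint differ by the usual orientation-reversal of Heegaard diagrams, which conjugates the absolute Spin$^c$ structure, giving $\mathfrak s_z^{\Gamma_2}(\mathbf x)=J\mathfrak s_z^{\Gamma_1}(\mathbf x)$. Within $\Gamma_1$ itself, moving the basepoint from $w$ to $z$ along the knot $K=\gamma_z-\gamma_w$ shifts the absolute Spin$^c$ structure by $\pm\mathrm{PD}[K]$. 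Feeding these into the affine involution property $J(\mathfrak s+a)=J\mathfrak s-a$ for $a\in H^2(Y;\mathbb Z)$ then yields the desired equality.

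The only genuinely delicate point is the sign in the basepoint-change formula, which must be tracked through the vector-field conventions of Section~\ref{sect:prelim}. A cleaner alternative that avoids this bookkeeping is to argue directly at the vector-field level: choose representatives $v_1$ of $\xi$ and $v_2$ of $\widetilde J\xi$ on $M$ so that $v_1=\nabla f$ and $v_2=-\nabla f$ off a small tubular neighbourhood of $\gamma_w\cup\gamma_z$, and set $\tilde v=v_1\cup u$ to be the filled-in vector field representing $G_{Y,K}(\xi)$. Then $v_2\cup u$ agrees with $-\tilde v$ off $\nu(K)$, so their Spin$^c$ classes differ by an integer multiple of $\mathrm{PD}[K]$; computing this integer from the prescribed local model $u$ against the standard extensions on $\nu(\gamma_w\cup\gamma_z)$ gives $+1$, yielding $G_{Y,K}(\widetilde J\xi)-JG_{Y,K}(\xi)=\mathrm{PD}[K]$.
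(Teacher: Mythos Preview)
Your approach is essentially the paper's: reduce to the two standard absolute identities (orientation reversal of the Heegaard diagram conjugates $\mathfrak s$, and moving the basepoint shifts $\mathfrak s$ by $\mathrm{PD}[K]$), applied to the pair $(\Gamma_1,\Gamma_2)$. The only difference is the order: the paper first conjugates using basepoint $w$ to get $JG_{Y,K}(\xi)=J\mathfrak s^1_w(\mathbf x)=\mathfrak s^2_w(\mathbf x)$, and \emph{then} applies the basepoint-change formula inside $\Gamma_2$, citing \cite[Lemma~2.19]{OSzAnn1} or \cite[Equation~(1)]{RasBerge} for $\mathfrak s^2_z(\mathbf x)=\mathfrak s^2_w(\mathbf x)+\mathrm{PD}[K]$. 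Doing the basepoint change in $\Gamma_2$ rather than $\Gamma_1$ is what lets the paper quote the sign directly and avoid the very bookkeeping you flag as ``delicate''; your order forces you through $J(\mathfrak s+a)=J\mathfrak s-a$ and hence requires the opposite sign in $\Gamma_1$, which you leave as $\pm$. Your vector-field alternative is a reasonable heuristic but, as written, the sentence ``computing this integer \dots\ gives $+1$'' is an assertion rather than a computation, so it does not actually close the sign issue either. In short: same proof, and the cleanest fix is to swap the order of your two steps to match the paper's.
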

\begin{proof}
Suppose $\xi=\underline{\mathfrak s}^1_{w,z}(\mathbf x)$, then $G_{Y,K}(\xi)=\mathfrak s^1_w(\mathbf x)$, hence $JG_{Y,K}(\xi)=\mathfrak s^2_w(\mathbf x)$.
On the other hand, $\widetilde J\xi=\underline{\mathfrak s}^2_{z,w}(\mathbf x)$, so $G_{Y,K}(\widetilde J\xi)=\mathfrak s^2_{z}(\mathbf x)$.
By \cite[Lemma~2.19]{OSzAnn1} or \cite[Equation~(1)]{RasBerge}, $$\mathfrak s^2_z(\mathbf x)=\mathfrak s^2_w(\mathbf x)+\mathrm{PD}[K].$$
So our conclusion holds.
\end{proof}

The following theorem is an analogue of \cite[Proposition~3.10]{OSzKnot}. We have been informed that some cases of the theorem are already contained in \cite{BGH}.

\begin{thm}\label{thm:RatSymm}
Let $\mathfrak s$ be a Spin$^c$ structure over $Y$, and
let $\xi\in\underline{\mathrm{Spin}^c}(Y,K)$ be a relative
Spin$^c$ structure with underlying Spin$^c$ structure $\mathfrak s$. 

\noindent(a) There
is an isomorphism of chain complexes
$$\widehat{CFK}(Y,K,\xi)\cong\widehat{CFK}(Y,K,\widetilde J\xi).$$

\noindent(b) 
The map $\widetilde J$ maps $\mathcal B_{Y,K}$ into $\mathcal B_{Y,K}$, and $\widetilde J^2=\mathrm{id}$. 

\noindent(c) If $\mathfrak s$ is a torsion Spin$^c$ structure, then there is an isomorphism of absolutely graded chain complexes:
$$\widehat{CFK}_*(Y,K,\xi)\cong\widehat{CFK}_{*+d}(Y,K,\widetilde J\xi),$$
where $d=A(\widetilde J\xi)-A(\xi)$.
\end{thm}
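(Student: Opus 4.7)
My approach would be to exploit the dual Heegaard diagram $\Gamma_2$, mirroring the strategy Ozsv\'ath--Szab\'o use for the conjugation symmetry in Proposition~3.10 of \cite{OSzKnot}, adapted to the rationally null-homologous setting. For part~(a), I would first note that the natural bijection on intersection points $\mathbb T_\alpha \cap \mathbb T_\beta = \mathbb T_\beta \cap \mathbb T_\alpha$ identifies the generator sets of the two chain complexes: by the very definition of $\widetilde J$, a point $\mathbf x$ with $\underline{\mathfrak s}^1_{w,z}(\mathbf x) = \xi$ in $\Gamma_1$ is the same as the point with $\underline{\mathfrak s}^2_{z,w}(\mathbf x) = \widetilde J\xi$ in $\Gamma_2$. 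For the differentials, I would identify the relevant moduli spaces of Whitney disks: passing from $\Gamma_1$ to $\Gamma_2 = (-\Sigma, \mbox{\boldmath$\beta$}, \mbox{\boldmath$\alpha$}, z, w)$ and precomposing holomorphic maps with complex conjugation on the source gives a tautological bijection between Maslov-index-$1$ disks with $n_w = n_z = 0$ on the two sides, so the differentials are matched under the identification.

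For part~(b), I would observe that dualizing a second time returns $\Gamma_1$ with the original basepoint labels intact, and tracing through the definition of $\widetilde J$ shows $\widetilde J^2 = \mathrm{id}$. The invariance of $\mathcal B_{Y,K}$ is then immediate from part~(a): the chain isomorphism forces $\widehat{HFK}(Y,K,\xi)\neq 0$ iff $\widehat{HFK}(Y,K,\widetilde J\xi)\neq 0$.

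For part~(c), the main ingredient is Lee--Lipshitz's formula (Lemma~\ref{lem:LL}) computing relative Maslov gradings via rational domains, together with Lemma~\ref{lem:RelAlex} for Alexander gradings. The difference in absolute grading between the two sides of the chain isomorphism in (a) is constant across all generators, so it suffices to compute it for a single pair $(\mathbf x, \mathbf y)$ connected by a rational domain $\psi$. The Euler measure term $e(D_i)$ is insensitive to the orientation reversal of $\Sigma$ and the swap of $\mbox{\boldmath$\alpha$}$, $\mbox{\boldmath$\beta$}$, so the relative Maslov contributions from $\psi$ agree in the two diagrams. The shift must therefore come from the swap of basepoints $w \leftrightarrow z$ in the dual diagram, which changes $n_w - n_z$ to $n_z - n_w$. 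By Lemma~\ref{lem:RelAlex}, this swap realigns the Alexander grading, so the resulting shift in absolute Maslov grading works out to $d = A(\widetilde J\xi) - A(\xi)$.

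The main obstacle will be part~(c): the absolute Maslov grading is normalized using $HF^\infty(Y,\mathfrak s)$ for the underlying Spin$^c$ structure, and by Lemma~\ref{lem:Js+K} the dual diagram presents data for $JG_{Y,K}(\xi)+\mathrm{PD}[K]$, so one must carefully relate the two normalizations. I expect to invoke the symmetry $d(Y,\mathfrak s) = d(Y,J\mathfrak s)$ from (\ref{eq:CorrSymm}) together with a direct generator-by-generator comparison to pin down the shift constant and confirm it coincides with $A(\widetilde J\xi) - A(\xi)$ rather than differing by a $\mathrm{PD}[K]$-dependent term.
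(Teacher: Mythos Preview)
Your treatment of parts~(a) and~(b) is essentially identical to the paper's. For part~(c), you have correctly identified the ingredients (Lee--Lipshitz rational domains, Lemma~\ref{lem:RelAlex}) and the obstacle (the two absolute gradings live over different Spin$^c$ structures $\mathfrak s$ and $J\mathfrak s + \mathrm{PD}[K]$), but your proposal does not actually close the gap.

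The difficulty is this: a rational domain $\psi$ in $\Gamma_1$ connecting $\mathbf x$ to $\mathbf y$ computes only the \emph{relative} quantity $\mathrm{Gr}_1(\mathbf x)-\mathrm{Gr}_1(\mathbf y)$, and comparing it with the analogous quantity in $\Gamma_2$ just reconfirms that the shift $d$ is constant. What you need is $\mathrm{Gr}_2(\mathbf x)-\mathrm{Gr}_1(\mathbf x)$ for a single $\mathbf x$, and no rational domain within one diagram can produce this, because $\mathrm{Gr}_1$ is normalized via $\widehat{CF}(Y,\mathfrak s)$ while $\mathrm{Gr}_2$ is normalized via $\widehat{CF}(Y,J\mathfrak s+\mathrm{PD}[K])$. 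Invoking $d(Y,\mathfrak s)=d(Y,J\mathfrak s)$ does not help either: that identity concerns the bottom of the tower in $HF^+$, not the absolute grading of an individual knot Floer generator, and in any case the second Spin$^c$ structure here is $J\mathfrak s+\mathrm{PD}[K]$, not $J\mathfrak s$.

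The paper supplies the missing bridge by exploiting the \emph{invariance} of absolutely graded knot Floer homology. Since $\Gamma_1$ and $\Gamma_2$ are both doubly pointed diagrams for $(Y,K)$, there is a grading-preserving chain homotopy equivalence $f\colon\widehat{CFK}(\Gamma_1,\xi)\to\widehat{CFK}(\Gamma_2,\xi)$ for the \emph{same} $\xi$. Starting from a generator $\mathbf x_1$ in $\widehat{CFK}(\Gamma_1,\xi)$, one picks a generator $\mathbf y_2$ appearing in $f(\mathbf x_1)$; the same intersection point, viewed in $\Gamma_1$, is a generator $\mathbf y_1$ lying in relative Spin$^c$ structure $\widetilde J\xi$. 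The chain of equalities $\mathrm{Gr}(\mathbf x_1)=\mathrm{Gr}(\mathbf y_2)=\mathrm{Gr}(\mathbf y_1)-d=\mathrm{Gr}(\mathbf x_2)-d$ now lets one compute $d$ via a rational domain $\psi_1$ in $\Gamma_1$ from $\mathbf y_1$ to $\mathbf x_1$ with $n_w(\psi_1)=0$ (this exists because both underlying Spin$^c$ structures are torsion): one gets $\mu(\psi_1)=d$ and $n_z(\psi_1)=A(\widetilde J\xi)-A(\xi)$. Passing to $\Gamma_2$ and subtracting $n_z(\psi_1)$ copies of $-\Sigma$ to restore the basepoint condition there yields $\mu(\psi_1)-2n_z(\psi_1)=-d$, and the two equations together give $d=n_z(\psi_1)=A(\widetilde J\xi)-A(\xi)$. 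This use of the grading-preserving invariance map $f$ is the step your outline is missing.
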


\begin{proof}
(a)  If $\phi$ is a holomorphic disk in $\Gamma_1$
connecting $\mathbf x$ to $\mathbf y$, then $\phi$ gives rise to a
holomorphic disk $\overline{\phi}$ in $\Gamma_2$ connecting
$\mathbf x$ to $\mathbf y$. Topologically, $\overline{\phi}$ is
just $-\phi$. 

The above argument implies that $\widehat{CFK}(Y,K,\xi)\cong\widehat{CFK}(Y,K,\widetilde J\xi)$ as chain complexes.

\vspace{5pt}\noindent(b) 
The isomorphism in (a) implies that $\widetilde J$ maps $\mathcal B_{Y,K}$ into $\mathcal B_{Y,K}$.

If $\xi\in\mathcal B_{Y,K}$ is represented by $\mathbf x$ in $\Gamma_1$, then $\widetilde J\xi\in\mathcal B_{Y,K}$ is represented by $\mathbf x$ in $\Gamma_2$. Using Lemma~\ref{lem:JInvariance},
 $\widetilde J^2\xi=\widetilde J_{\Gamma_2}\widetilde J\xi$ is represented by $\mathbf x$ in $\Gamma_1$.

\vspace{5pt}\noindent(c) Since $\mathfrak s$ is torsion, there exists an absolute $\mathbb Q$--grading on $\widehat{CF}(Y,\mathfrak s)$, 
hence an induced absolute $\mathbb Q$--grading on $\widehat{CFK}(Y,K,\xi)$. Since the isomorphism in (a) preserves the relative grading, there exists a rational number $d$, such that 
$$\widehat{CFK}_*(Y,K,\xi)\cong\widehat{CFK}_{*+d}(Y,K,\widetilde J\xi).$$
It is clear that the number $d$ does not depend on the choice of the Heegaard diagram, because both $\widehat{HFK}(Y,K,\xi)$ and $\widehat{HFK}(Y,K,\widetilde J\xi)$ are nontrivial absolutely graded groups.

Using (a), we get two isomorphisms which increase the Maslov grading by $d$:
$$g_1\co \widehat{CFK}(\Gamma_1,\xi)\to\widehat{CFK}(\Gamma_2,\widetilde J\xi),$$
$$g_2\co \widehat{CFK}(\Gamma_2,\xi)\to\widehat{CFK}(\Gamma_1,\widetilde J\xi).$$

Since both $\Gamma_1$ and $\Gamma_2$ represent $(Y,K)$, there is a grading preserving chain homotopy equivalence
$$f\co \widehat{CFK}(\Gamma_1,\xi)\to\widehat{CFK}(\Gamma_2,\xi).$$

Suppose $\mathbf x_1$ in $\Gamma_1$  is a generator for $\widehat{CFK}(\Gamma_1,\xi)$, 
let $\mathbf x_2=g_1(\mathbf x_1)$ in $\Gamma_2$.
Let $\mathbf y_2$  in $\Gamma_2$ be a generator which contributes to $f(\mathbf x_1)$, 
and let $\mathbf y_1=g_2(\mathbf y_2)$ in $\Gamma_1$. Since $g_1,g_2$ increase the grading by $d$ and $f$ is grading preserving, we have
\begin{equation}\label{eq:GrRel}
\mathrm{Gr}(\mathbf x_1)=\mathrm{Gr}(\mathbf y_2)=\mathrm{Gr}(\mathbf y_1)-d=\mathrm{Gr}(\mathbf x_2)-d.
\end{equation}

Since $\mathfrak s$ is torsion and $[K]$ is rationally null-homologous, $J\mathfrak s+\mathrm{PD}[K]$ is also torsion. Using Lemma~\ref{lem:Js+K}, we conclude that there exists a rational domain $\psi_1$ in $\Gamma_1$ connecting $\mathbf y_1$ to $\mathbf x_1$, such that $n_{w}(\psi_1)=0$. 
By Lemma~\ref{lem:LL} and (\ref{eq:GrRel}), we see that
\begin{equation}\label{eq:mu1}
\mu(\psi_1)=\mathrm{Gr}(\mathbf y_1)-\mathrm{Gr}(\mathbf x_1)=d. 
\end{equation}
Moreover, by Lemma~\ref{lem:RelAlex}
\begin{equation}\label{eq:n_z}
n_z(\psi_1)=A(\mathbf y_1)-A(\mathbf x_1)=A(\widetilde J\xi)-A(\xi).
\end{equation}
Noting that $\psi_2=(-\psi_1)-n_z(\psi_1)(-\Sigma)$ is a rational domain in $\Gamma_2$ that connects $\mathbf y_2$ to $\mathbf x_2$ with $n_z(\psi_2)=0$, and that $\mu(-\Sigma)=2$ in $\Gamma_2$, we have
\begin{equation}\label{eq:mu2}
\mathrm{Gr}(\mathbf y_2)-\mathrm{Gr}(\mathbf x_2)=\mu(\psi_2)=\mu(\psi_1)-2n_z(\psi_1).
\end{equation}
It follows from (\ref{eq:mu1}), (\ref{eq:mu2}) and (\ref{eq:GrRel}) that
$$\mu(\psi_1)+(\mu(\psi_1)-2n_z(\psi_1))=0.$$
Hence $$\mu(\psi_1)=n_z(\psi_1),$$
so it follows from (\ref{eq:mu1}) and (\ref{eq:n_z}) that $d=A(\widetilde J\xi)-A(\xi)$.
\end{proof}

For any $\mathfrak s\in\spin(Y)$, let
$$\widehat{HFK}(Y,K,\mathfrak s)=\bigoplus_{\xi\in\relspin(Y,K),\:G_{Y,K}(\xi)=\mathfrak s}\widehat{HFK}(Y,K,\xi).$$

\begin{cor}\label{cor:Isom}
Suppose $K$ is a rationally null-homologous knot in $Y$, $\mathfrak s$ is a Spin$^c$ structure over $Y$. Then there is an isomorphism
$$\iota\co\widehat{HFK}(Y,K,\mathfrak s)\cong\widehat{HFK}(Y,K,J\mathfrak s+\mathrm{PD}[K]).$$
If $\mathfrak s$ is torsion, and $\xi\in\relspin(Y,K)\in G_{Y,K}^{-1}(\mathfrak s)$, then the restriction of $\iota$ on $\widehat{HFK}(Y,K,\xi)$ is homogeneous of degree $A(\widetilde J\xi)-A(\xi)$.
\end{cor}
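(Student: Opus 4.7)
The plan is to deduce this corollary as a direct consequence of Theorem~\ref{thm:RatSymm}, assembling the per--relative--Spin$^c$-structure isomorphisms produced there into a single isomorphism on the level of underlying Spin$^c$ structures. The key bookkeeping input is Lemma~\ref{lem:Js+K}, which identifies the underlying Spin$^c$ structure of $\widetilde J\xi$ as $JG_{Y,K}(\xi)+\mathrm{PD}[K]$, so that $\widetilde J$ sends the fiber $G_{Y,K}^{-1}(\mathfrak s)\cap\mathcal B_{Y,K}$ into the fiber $G_{Y,K}^{-1}(J\mathfrak s+\mathrm{PD}[K])\cap\mathcal B_{Y,K}$.

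First I would establish that $\widetilde J$ restricts to a bijection between these two fibers. Part (b) of Theorem~\ref{thm:RatSymm} says $\widetilde J$ preserves $\mathcal B_{Y,K}$ and satisfies $\widetilde J^2=\mathrm{id}$; combining this involutivity with the underlying Spin$^c$ computation from Lemma~\ref{lem:Js+K} (applied twice, using $J^2=\mathrm{id}$) shows that $\widetilde J$ maps $G_{Y,K}^{-1}(\mathfrak s)\cap\mathcal B_{Y,K}$ bijectively onto $G_{Y,K}^{-1}(J\mathfrak s+\mathrm{PD}[K])\cap\mathcal B_{Y,K}$, with inverse given by $\widetilde J$ itself.

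Next I would produce $\iota$ by summing the pointwise isomorphisms. For each $\xi$ in the first fiber, Theorem~\ref{thm:RatSymm}(a) gives an isomorphism $\widehat{HFK}(Y,K,\xi)\cong\widehat{HFK}(Y,K,\widetilde J\xi)$. Taking the direct sum over all such $\xi$ and using the bijection from the previous step, together with the definition
\[
\widehat{HFK}(Y,K,\mathfrak s)=\bigoplus_{\xi\in G_{Y,K}^{-1}(\mathfrak s)}\widehat{HFK}(Y,K,\xi),
\]
yields the desired isomorphism
\[
\iota\co\widehat{HFK}(Y,K,\mathfrak s)\xrightarrow{\ \cong\ }\widehat{HFK}(Y,K,J\mathfrak s+\mathrm{PD}[K]).
\]
Note that summands with $\xi\notin\mathcal B_{Y,K}$ contribute zero on both sides, so they cause no trouble.

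Finally, when $\mathfrak s$ is torsion, so is $J\mathfrak s+\mathrm{PD}[K]$ (since $[K]$ is torsion in $H_1(Y;\mathbb Q)$), so both sides carry absolute $\mathbb Q$--gradings. The statement about homogeneity of degree $A(\widetilde J\xi)-A(\xi)$ on each summand $\widehat{HFK}(Y,K,\xi)$ is then simply a restatement of Theorem~\ref{thm:RatSymm}(c), which identifies the grading shift of the chain-level isomorphism. I do not anticipate any real obstacle here; the only thing to be slightly careful about is that the pointwise isomorphisms assemble into a well-defined map on the whole direct sum, which is immediate once the bijectivity of $\widetilde J$ on the relevant fibers of $G_{Y,K}$ has been recorded.
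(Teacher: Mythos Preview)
Your proposal is correct and follows exactly the route the paper takes: the paper's own proof is the single line ``This follows from Theorem~\ref{thm:RatSymm} and Lemma~\ref{lem:Js+K},'' and you have simply unpacked that sentence in detail. Your care with the bijection on fibers (using part (b)) and with the zero summands for $\xi\notin\mathcal B_{Y,K}$ makes explicit what the paper leaves implicit, but there is no difference in approach.
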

\begin{proof}
This follows from Theorem~\ref{thm:RatSymm} and Lemma~\ref{lem:Js+K}.
\end{proof}

\begin{lem}\label{lem:J}
Suppose $\xi_1,\xi_2\in\mathcal B_{Y,K}$, then
$$\widetilde J\xi_1-\widetilde J\xi_2=-(\xi_1-\xi_2)\in H^2(Y,K).$$
\end{lem}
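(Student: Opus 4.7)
My plan is to reduce the claim to a computation on a single Heegaard diagram. By Lemma~\ref{lem:JInvariance}, the map $\widetilde J$ is independent of the choice of doubly-pointed Heegaard diagram, so I may fix one diagram $\Gamma_1=(\Sigma,\mbox{\boldmath${\alpha}$},\mbox{\boldmath${\beta}$},w,z)$ for $(Y,K)$. Since $\xi_1,\xi_2\in\mathcal B_{Y,K}$, after sufficiently many stabilizations both are represented in $\Gamma_1$, so I may pick $\mathbf x_1,\mathbf x_2\in\mathbb T_{\alpha}\cap\mathbb T_{\beta}$ with $\xi_i=\underline{\mathfrak s}^1_{w,z}(\mathbf x_i)$. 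By the very definition of $\widetilde J$, the relative Spin$^c$ structures $\widetilde J\xi_i$ are represented by the same intersection points in the dual diagram $\Gamma_2=(-\Sigma,\mbox{\boldmath${\beta}$},\mbox{\boldmath${\alpha}$},z,w)$, i.e., $\widetilde J\xi_i=\underline{\mathfrak s}^2_{z,w}(\mathbf x_i)$.

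The main input is the standard Ozsv\'ath--Szab\'o formula for the difference of two relative Spin$^c$ structures associated to a pair of generators on a single diagram,
$$\underline{\mathfrak s}_{w,z}(\mathbf x)-\underline{\mathfrak s}_{w,z}(\mathbf y)=\mathrm{PD}[\epsilon(\mathbf x,\mathbf y)],$$
where $\epsilon(\mathbf x,\mathbf y)=a-b$ for any 1-chains $a\subset\mbox{\boldmath${\alpha}$}$, $b\subset\mbox{\boldmath${\beta}$}$ satisfying $\partial a=\partial b=\mathbf y-\mathbf x$. Applied to $\Gamma_1$ this gives $\xi_1-\xi_2=\mathrm{PD}[a-b]$ for a chosen pair of chains $a,b$. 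In $\Gamma_2$, the $\alpha$-curves are $\mbox{\boldmath${\beta}$}$ and the $\beta$-curves are $\mbox{\boldmath${\alpha}$}$, so applying the same formula with the chains $a':=b$ and $b':=a$ (now lying in the $\alpha$- and $\beta$-curves of $\Gamma_2$, respectively) yields $\widetilde J\xi_1-\widetilde J\xi_2=\mathrm{PD}[b-a]=-\mathrm{PD}[a-b]$. Combining these identities gives $\widetilde J\xi_1-\widetilde J\xi_2=-(\xi_1-\xi_2)$, as desired.

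The only delicate point is verifying that no additional sign creeps in when the orientation of $\Sigma$ is reversed in passing to $\Gamma_2$. The relevant Poincar\'e duality $H_1(M;\mathbb Z)\cong H^2(M,\partial M;\mathbb Z)$ on $M=Y\setminus\overset{\circ}{\nu}(K)$ uses the orientation of $M$ inherited from $Y$, which does not depend on the orientation of $\Sigma$; hence the same PD isomorphism appears in both calculations and the only sign flip comes from interchanging the roles of the $\mbox{\boldmath${\alpha}$}$- and $\mbox{\boldmath${\beta}$}$-curves. I would also briefly check that the displayed difference formula is the one corresponding to the Spin$^c$ convention of \cite{OSzRatSurg} adopted in this paper, but any change of convention is merely a global translation of $\underline{\mathfrak s}_{w,z}$ and leaves differences unchanged. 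Note also that the form $\widetilde J\xi=c-\xi$ produced by the lemma is consistent with the involution property $\widetilde J^2=\mathrm{id}$ established in Theorem~\ref{thm:RatSymm}(b).
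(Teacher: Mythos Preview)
Your argument is correct and follows essentially the same route as the paper: represent $\xi_1,\xi_2$ by intersection points, apply the Ozsv\'ath--Szab\'o difference formula (\cite[Lemma~2.19]{OSzAnn1}) in $\Gamma_1$ and again in $\Gamma_2$, and observe that swapping the roles of $\mbox{\boldmath${\alpha}$}$ and $\mbox{\boldmath${\beta}$}$ negates the $\epsilon$--class. Your added remark that the Poincar\'e duality on $Y\setminus\overset{\circ}{\nu}(K)$ uses the orientation of $Y$ rather than of $\Sigma$ is a worthwhile clarification, and note that the stabilization step is in fact unnecessary since $\widehat{HFK}(Y,K,\xi_i)\ne0$ already forces each $\xi_i$ to be hit by some generator in any diagram.
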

\begin{proof}
Suppose $\xi_1,\xi_2$ are represented by intersection points $\mathbf x,\mathbf y$. Let $a$ be a multi $\alpha$ arc connecting $\mathbf y$ to $\mathbf x$, $b$ be a multi $\beta$ arc connecting $\mathbf x$ to $\mathbf y$. By \cite[Lemma~2.19]{OSzAnn1}, $\xi_1-\xi_2$ is represented by $a+b$, and $\widetilde J\xi_1-\widetilde J\xi_2$ is represented by $(-b)+(-a)$. So our conclusion holds.
\end{proof}

\begin{cor}\label{cor:MaxJMin}
Suppose $\xi\in\mathcal B_{Y,K}$, then $A(\xi)=A_{\max}$ if and only if $A(\widetilde J\xi)=A_{\min}$.
\end{cor}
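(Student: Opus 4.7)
The plan is to show that $\widetilde J$ acts on the affine function $A$ as an affine reflection on $\mathcal B_{Y,K}$, with fixed point $(A_{\max}+A_{\min})/2$; the corollary then drops out immediately.

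First I would combine Lemma~\ref{lem:J} with the defining relation (\ref{eq:A}) for the Alexander map $A$. Given $\xi_1,\xi_2\in\mathcal B_{Y,K}$, applying (\ref{eq:A}) to the pair $(\widetilde J\xi_1,\widetilde J\xi_2)$ and substituting $\widetilde J\xi_2-\widetilde J\xi_1=-(\xi_2-\xi_1)=\xi_1-\xi_2$ from Lemma~\ref{lem:J} gives
\[
A(\widetilde J\xi_1)-A(\widetilde J\xi_2)=\frac{\langle \widetilde J\xi_2-\widetilde J\xi_1,[F]\rangle}{|[\partial F]\cdot[\mu]|}=-\bigl(A(\xi_1)-A(\xi_2)\bigr).
\]
Equivalently, the quantity $A(\xi)+A(\widetilde J\xi)$ is independent of $\xi\in\mathcal B_{Y,K}$; call this constant $C$.

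Next I would use Theorem~\ref{thm:RatSymm}(b), which says that $\widetilde J$ preserves $\mathcal B_{Y,K}$ and satisfies $\widetilde J^2=\mathrm{id}$, so it is a bijection of $\mathcal B_{Y,K}$ to itself. Pick $\xi^{+}\in\mathcal B_{Y,K}$ with $A(\xi^{+})=A_{\max}$. Since $\widetilde J\xi^{+}\in\mathcal B_{Y,K}$, we get $C-A_{\max}=A(\widetilde J\xi^{+})\ge A_{\min}$. Picking instead $\xi^{-}$ with $A(\xi^{-})=A_{\min}$ gives $C-A_{\min}=A(\widetilde J\xi^{-})\le A_{\max}$. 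The two inequalities together force $C=A_{\max}+A_{\min}$.

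Finally, from $A(\widetilde J\xi)=A_{\max}+A_{\min}-A(\xi)$ one reads off immediately that $A(\xi)=A_{\max}$ if and only if $A(\widetilde J\xi)=A_{\min}$, which is the desired conclusion. There is no real obstacle here; the only thing to double-check is that $\widetilde J$ really is a self-bijection of $\mathcal B_{Y,K}$ so that both $A_{\max}$ and $A_{\min}$ are in fact attained on the $\widetilde J$-image, but this is precisely the content of Theorem~\ref{thm:RatSymm}(b).
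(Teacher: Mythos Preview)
Your proof is correct and follows essentially the same approach as the paper: both combine Lemma~\ref{lem:J} with (\ref{eq:A}) to see that $A(\widetilde J\xi_1)-A(\widetilde J\xi_2)=-(A(\xi_1)-A(\xi_2))$, and then invoke the bijectivity of $\widetilde J$ on $\mathcal B_{Y,K}$ from Theorem~\ref{thm:RatSymm}(b). The paper phrases the conclusion directly (``if $A(\xi)$ is maximal then $A(\widetilde J\xi)$ is minimal over the $\widetilde J$-image, which is all of $\mathcal B_{Y,K}$''), while you take the equivalent but slightly more explicit route of computing the constant $C=A(\xi)+A(\widetilde J\xi)=A_{\max}+A_{\min}$; the paper in fact records this same identity in the remark immediately following the corollary.
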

\begin{proof}
If $A(\xi)\ge A(\eta)$ for all $\eta\in \mathcal B_{Y,K}$, then Lemma~\ref{lem:J} shows that $A(\widetilde J\xi)\le A(\widetilde J\eta)$ for all $\eta\in\mathcal B_{Y,K}$. Since $\widetilde J$ surjects onto $\mathcal B_{Y,K}$, $A(\widetilde J\xi)=A_{\min}$.
\end{proof}

\begin{rem}
If we choose the affine map $A$ such that $A_{\max}=-A_{\min}$, then the above corollary implies that $A(\widetilde J\xi)=-A(\xi)$.
\end{rem}

%%%%%
%%%%%
%%%%%
%%%%%
%%%%%

\section{A lower bound for $\Theta$}\label{sect:Bound}

In this section, we will prove Theorem~\ref{thm:GenusBound}. For simplicity, we will work over a fixed field $\mathbb F$. (A priori, the correction terms defined over different fields may not be the same, but they have similar properties. When $\mathbb F=\mathbb Q$, the correction terms are the same as the original correction terms defined over $\mathbb Z$.)

\subsection{Computing correction terms from $CFK^{\infty}$}

Fix a doubly pointed Heegaard diagram $\Gamma_1=(\Sigma,\mbox{\boldmath${\alpha}$},
\mbox{\boldmath$\beta$},w,z)$ and consider the associated knot Floer chain complex $CFK^\infty(Y,K,\xi)$ with $G_{Y,K}(\xi)=\mathfrak s$. Recall that $CFK^{\infty}(Y,K,\xi)$ is an
abelian group freely generated by triples $[\mathbf y,i,j]$ with
$$\mathbf y\in\mathbb T_{\alpha}\cap\mathbb T_{\beta}$$ and $$\underline{\mathfrak
s}_{w,z}(\mathbf y)+(i-j)PD[\mu]=\xi.$$

Let $\mathfrak G=\mathfrak{G}_{Y,K}$ be a set of generators of $\widehat{HFK}(Y,K)$, such that each generator is supported in a single relative Spin$^c$ structure and a single Maslov grading. 
By \cite[Lemma~4.5]{RasThesis}, $CFK^\infty(Y,K,\xi)$ is homotopy equivalent to a chain complex whose underlying abelian group is $\widehat{HFK}(Y,K,\xi)\otimes\mathbb F[U,U^{-1}]$, so we may assume $CFK^\infty(Y,K,\xi)$  is generated by generators $[\mathbf x,i,j]$ satisfying that every $\mathbf x$ is in $\mathfrak{G}$.

Since $Y$ is a rational homology sphere, $HF^\infty(Y,\mathfrak s) \cong \mathbb{F}[U,U^{-1}]$.
Fix a sufficiently large integer $N$.  Let $\mathcal G_{\mathfrak s}\subset CFK^\infty(Y,K,\xi)$ be the set that consists of all homogeneous chains that represent $U^{-N}\in HF^\infty(Y,\mathfrak s)$:
$$\mathcal G_{\mathfrak s}=\left\{X=\left.\sum_{\mathbf x\in \mathfrak{G}, i,j\in \mathbb Z}a_{\mathbf x,i,j}[\mathbf x,i,j]\, \right| \,  [X]=U^{-N}\in HF^\infty(Y,\mathfrak s), \mathrm{Gr}[\mathbf x,i,j]=d(Y,\mathfrak s)+2N \right\}$$
where $\mathrm{Gr}$ is the absolute Maslov grading.

\begin{lem}\label{lem:Minimax}
With the above notation, $$N= \min_{X\in \mathcal G_{\mathfrak s}}  \max _{[\mathbf x,i,j]\in X} i. $$
Here, $[\mathbf x,i,j]\in X$ means that the coefficient of $[\mathbf x,i,j]$ in the chain $X$ is nonzero.
\end{lem}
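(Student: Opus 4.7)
The plan is to reinterpret $\min_{X\in\mathcal{G}_{\mathfrak s}}\max_{[\mathbf x,i,j]\in X}i$ as the smallest integer $M$ for which $U^{-N}\in HF^\infty(Y,\mathfrak s)$ admits a cycle representative supported in the subcomplex $CFK^\infty(Y,K,\xi)\{i\le M\}$. Since $\mathcal G_{\mathfrak s}$ consists precisely of homogeneous cycles in Maslov grading $d(Y,\mathfrak s)+2N$ whose class is $U^{-N}$, these two quantities coincide, and finding the smallest such $M$ is in turn equivalent to finding the smallest $M$ for which $U^{-N}$ lies in the image of the inclusion-induced map $H_*(CFK^\infty\{i\le M\})\to HF^\infty(Y,\mathfrak s)$.

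First I would observe that forgetting the $j$-coordinate identifies $CFK^\infty(Y,K,\xi)$, as an $i$-filtered absolutely $\mathbb Q$-graded complex with $U$-action, with $CF^\infty(Y,\mathfrak s)$, via $[\mathbf x,i,j]\mapsto[\mathbf x,i]$. Under this identification $CFK^\infty\{i\le M\}$ corresponds to $CF^\infty(Y,\mathfrak s)\{i\le M\}$, and since $U$ shifts $i$ by $-1$ one has
\[
CF^\infty(Y,\mathfrak s)\{i\le M\}=U^{-(M+1)}\cdot CF^-(Y,\mathfrak s),
\]
so $H_*(CFK^\infty\{i\le M\})\cong U^{-(M+1)}HF^-(Y,\mathfrak s)$. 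The image of $HF^-(Y,\mathfrak s)\to HF^\infty(Y,\mathfrak s)$ equals the kernel of $HF^\infty\to HF^+$, and using the identification $HF^\infty\cong\mathbb F[U,U^{-1}]$ with $1$ in Maslov grading $d(Y,\mathfrak s)$ together with $\mathcal T^+=\mathbb F[U,U^{-1}]/U\mathbb F[U]$, this kernel is $U\mathbb F[U]$. Hence the image of $H_*(CFK^\infty\{i\le M\})$ inside $HF^\infty(Y,\mathfrak s)$ equals $U^{-(M+1)}\cdot U\mathbb F[U]=U^{-M}\mathbb F[U]=\{U^{-M},U^{-M+1},\dots\}$.

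This set contains $U^{-N}$ precisely when $M\ge N$, which yields both bounds at once: for $M=N$ there is a cycle in $CFK^\infty\{i\le N\}$ representing $U^{-N}$, while for $M=N-1$ no such representative exists. Combining gives $N=\min_{X\in\mathcal G_{\mathfrak s}}\max_{[\mathbf x,i,j]\in X}i$. The main thing I expect needs care is being consistent with the grading convention (so that $U^{-N}$ really sits in Maslov grading $d(Y,\mathfrak s)+2N$) and with the off-by-one in the definition of $CF^-$ (whether $i<0$ or $i\le 0$); once these are pinned down the argument is a short filtration computation built on the long exact sequence $HF^-\to HF^\infty\to HF^+$.
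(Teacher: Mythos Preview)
Your argument is correct. The paper's proof reaches the same conclusion by a more hands-on chain-level argument: setting $I(X)=\max_{[\mathbf x,i,j]\in X}i$, it shows $N\le I(X)$ by observing that $U^{I(X)+1}X$ lies in $C\{i<0\}$ and hence dies in $HF^+$, and then shows $N\ge I(X_0)$ for a minimizer $X_0$ by contradiction---if $U^{I(X_0)}X_0$ were null-homologous in $CFK^+=C\{i\ge 0\}$, one could subtract the boundary (after shifting back by $U^{-I(X_0)}$) to produce an $X'\in\mathcal G_{\mathfrak s}$ with strictly smaller $I$. Your approach packages exactly this content into the long exact sequence $HF^-\to HF^\infty\to HF^+$: the identification $C\{i\le M\}=U^{-(M+1)}CF^-$ together with $\operatorname{im}(HF^-\to HF^\infty)=U\mathbb F[U]$ is precisely what the paper's two steps verify by hand. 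The structural formulation is cleaner and makes the role of the correction term transparent, while the paper's version has the virtue of staying entirely inside $CFK^\infty$ and $CFK^+$ without invoking $HF^-$; the off-by-one worry you flag is resolved by the paper's convention $CFK^+=C\{i\ge 0\}$, which forces $CF^-=C\{i<0\}$ and matches your computation.
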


\begin{proof}
For $X\in \mathcal G_{\mathfrak s}$, let $I(X)=\max_{[\mathbf x,i,j]\in X} i$.  Then, $$U^{I(X)+1}\cdot X=\sum a_{\mathbf x,i,j}[\mathbf x,i-I(X)-1, j-I(X)-1]=0 \in HF^+(Y,{\mathfrak s})$$  since $i-I(X)-1<0$.  Hence, $N\leq I(X)$, $\forall X\in \mathcal G_{\mathfrak s}$.

On the other hand, let $X_0\in \mathcal G_{\mathfrak s}$ be a chain with $I(X_0)=\min_{X\in \mathcal G_{\mathfrak s}}I(X)$.  We claim that $$U^{I(X_0)}\cdot X_0 \neq 0 \in HF^+(Y,{\mathfrak s}),$$ which would imply $N \geq I(X_0)$.  We prove the claim by contradiction: If not, there is a $Z\in CFK^\infty(Y,K)$ of homogeneous grading $$\mathrm{Gr}(Z)=d(Y,{\mathfrak s})+2N-2I(X_0)+1$$ such that $\partial Z=U^{I(X_0)}\cdot X_0$ in the quotient complex $CFK^+(Y,K,\xi)=C\{i\geq 0\}$.  Equivalently, in $CFK^{\infty}(Y,K,\xi)$ we have $$U^{I(X_0)}\cdot X_0-\partial Z =\sum b_{\mathbf x,i,j} [\mathbf x,i,j]$$ where all $i <0$.  Let $X'=X_0-\partial (U^{-I(X_0)} Z)$.  It is clear from the construction that $X'\in \mathcal G_{\mathfrak s}$ and $I(X') < I(X_0)$.  This contradicts the assumption that $I(X_0)=\min_{X\in \mathcal G_{\mathfrak s}}I(X)$.

Therefore, we proved $N=I(X_0)=\min_{X\in \mathcal G_{\mathfrak s}}I(X).$
\end{proof}

\begin{prop}\label{prop:Correct}
With the same assumption,
\begin{equation}\label{eq:correct}
d(Y,{\mathfrak s})= \max_{X\in \mathcal G_{\mathfrak s}} \min _{[\mathbf x,i,j]\in X} \mathrm{Gr}(\mathbf x).
\end{equation}
\end{prop}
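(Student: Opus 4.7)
The plan is to derive Proposition~\ref{prop:Correct} directly from Lemma~\ref{lem:Minimax} by converting the max-over-$i$ statement into a min-over-$\mathrm{Gr}(\mathbf{x})$ statement. The key input is the standard formula for the absolute Maslov grading on $CFK^\infty$: the hat complex sits as the $i=0$ slice, on which the grading restricts to $\mathrm{Gr}(\mathbf{x})$; since $U$ acts by $[\mathbf{x},i,j]\mapsto [\mathbf{x},i-1,j-1]$ and drops the grading by $2$, one has
\begin{equation*}
\mathrm{Gr}([\mathbf{x},i,j]) = \mathrm{Gr}(\mathbf{x}) + 2i.
\end{equation*}

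Next, I would exploit the fact that every generator $[\mathbf{x},i,j]$ appearing in a chain $X\in\mathcal{G}_{\mathfrak{s}}$ must be homogeneous of Maslov grading $d(Y,\mathfrak{s})+2N$. By the formula above, this pins down the relation
\begin{equation*}
i = \tfrac{1}{2}\bigl(d(Y,\mathfrak{s}) + 2N - \mathrm{Gr}(\mathbf{x})\bigr)
\end{equation*}
for each such generator. Consequently, for a fixed $X\in\mathcal{G}_{\mathfrak{s}}$, maximizing $i$ is the same as minimizing $\mathrm{Gr}(\mathbf{x})$, and explicitly
\begin{equation*}
\max_{[\mathbf{x},i,j]\in X} i \;=\; \tfrac{1}{2}\Bigl(d(Y,\mathfrak{s}) + 2N - \min_{[\mathbf{x},i,j]\in X}\mathrm{Gr}(\mathbf{x})\Bigr).
\end{equation*}

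Finally, I would take the minimum over $X\in\mathcal{G}_{\mathfrak{s}}$ on both sides. The left-hand side becomes $N$ by Lemma~\ref{lem:Minimax}, and the minimum of the right-hand side equals $\tfrac12(d(Y,\mathfrak{s}) + 2N - \max_{X}\min_{[\mathbf{x},i,j]\in X}\mathrm{Gr}(\mathbf{x}))$. Solving this equation for the max-min quantity immediately gives $d(Y,\mathfrak{s})$, establishing (\ref{eq:correct}).

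There is no real obstacle; the content of the proposition is already packaged in Lemma~\ref{lem:Minimax}, and the remaining work is purely an algebraic change of variable using the Maslov-grading formula. The only thing one has to be mindful of is that the homogeneity condition built into $\mathcal{G}_{\mathfrak{s}}$ rigidly couples $i$ to $\mathrm{Gr}(\mathbf{x})$, which is what makes the translation between the two optimization problems exact rather than merely inequality.
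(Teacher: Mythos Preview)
Your proof is correct and essentially identical to the paper's own argument: both use the homogeneity constraint $\mathrm{Gr}(\mathbf{x})=\mathrm{Gr}(X)-2i$ together with $\mathrm{Gr}(X)=d(Y,\mathfrak s)+2N$ to convert Lemma~\ref{lem:Minimax} into the desired max--min formula by a direct algebraic substitution. The paper presents the same chain of equalities in a four-line display, so there is no meaningful difference in approach.
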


\begin{proof}
Since $X=\sum a_{\mathbf x,i,j}[\mathbf x,i,j]$ is homogeneous, we have $\mathrm{Gr}(\mathbf x)=\mathrm{Gr}(X)-2i$.  Therefore, 
\begin{eqnarray*} 
d(Y,{\mathfrak s}) &=& \mathrm{Gr}(X)-2N \\
&=& \mathrm{Gr}(X) - 2 \min_{X\in \mathcal G_{\mathfrak s}}  \max _{[\mathbf x,i,j]\in X} i\\
&=& \max_{X\in \mathcal G_{\mathfrak s}}  \min _{[\mathbf x,i,j]\in X} (\mathrm{Gr}(X)-2i) \\
&=& \max_{X\in \mathcal G_{\mathfrak s}}  \min _{[\mathbf x,i,j]\in X} \mathrm{Gr}(\mathbf x).
\end{eqnarray*}
\end{proof}

\subsection{More symmetries}

Recall that the chain complex $CFK^{\infty}(Y,K,\xi)$ can be viewed at the same time as $CFK^\infty(Y,K,\widetilde J\xi)$ associated to the Heegaard diagram $$\Gamma_2=(-\Sigma,
\mbox{\boldmath$\beta$},\mbox{\boldmath${\alpha}$},z,w).$$ There is a natural identification between intersection points in $\Gamma_1$ and $\Gamma_2$, and this can be extended to a chain isomorphism $f\co CFK^{\infty}_{\Gamma_1}(Y,K)\rightarrow CFK^{\infty}_{\Gamma_2}(Y,K)$ given by $f([\mathbf x,i,j])=[\mathbf x,j,i]$, where $\mathbf x\in \mathbb T_{\alpha} \cap \mathbb T_{\beta}$, and $i,j\in \mathbb Z$ denote the filtration with respect to $w, z$ respectively.  %From now on, we will identify the complex $CFK^\infty_{\Gamma_2}(Y,K)$ as $CFK^\infty_{\Gamma_1}(Y,K)$ with the roles of $i$ and $j$ interchanged.

\begin{lem}
Under the isomorphism $f$, the set $\mathcal G_{\mathfrak s}$ is identified with the set of all homogeneous generators that represent $U^{-M}\in HF^\infty(Y,J{\mathfrak s}+\mathrm{PD}[K])$ for some large integer $M$, associated to the Heegaard diagram $\Gamma_2$.
\end{lem}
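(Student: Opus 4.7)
The plan is to show that the coordinate-swap map $f$ is a $U$-equivariant chain isomorphism taking the summand of $CFK^\infty_{\Gamma_1}(Y,K)$ with underlying Spin$^c$ structure $\mathfrak s$ onto the summand of $CFK^\infty_{\Gamma_2}(Y,K)$ with underlying Spin$^c$ structure $J\mathfrak s+\mathrm{PD}[K]$, and then to read off the claim from the standard description of $HF^\infty$ as $\mathbb F[U,U^{-1}]$ on each Spin$^c$ sector.

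The chain map property follows from the same observation used in the proof of Theorem~\ref{thm:RatSymm}(a): each holomorphic disk $\phi\in\pi_2(\mathbf x,\mathbf y)$ in $\Gamma_1$ corresponds to a holomorphic disk $\bar\phi\in\pi_2(\mathbf x,\mathbf y)$ in $\Gamma_2$ whose underlying domain is $-\phi$. Because the two basepoints have swapped roles in $\Gamma_2$, we have $n_{w_{\Gamma_2}}(\bar\phi)=n_z(\phi)$ and $n_{z_{\Gamma_2}}(\bar\phi)=n_w(\phi)$, so a term-by-term comparison of the two differential formulas yields $\partial^\infty_{\Gamma_2}\circ f=f\circ\partial^\infty_{\Gamma_1}$. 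The $U$-equivariance of $f$ is automatic from $U\cdot[\mathbf x,i,j]=[\mathbf x,i-1,j-1]$. For the Spin$^c$ bookkeeping, the chain $f([\mathbf x,i,j])=[\mathbf x,j,i]$ viewed in $\Gamma_2$ has underlying Spin$^c$ structure $\mathfrak s^2_{w_{\Gamma_2}}(\mathbf x)=\mathfrak s^2_z(\mathbf x)$, which by the computation in the proof of Lemma~\ref{lem:Js+K} equals $J\mathfrak s+\mathrm{PD}[K]$.

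Passing to homology, $f$ induces a $U$-equivariant isomorphism $HF^\infty(Y,\mathfrak s)\to HF^\infty(Y,J\mathfrak s+\mathrm{PD}[K])$; since both sides are copies of $\mathbb F[U,U^{-1}]$, this isomorphism sends $U^{-N}$ to $U^{-M}$ for some integer $M$ (pinned down by the grading shift between the two sides and the two correction terms). Homogeneity is preserved by $f$ up to a uniform grading shift, so $\mathcal G_{\mathfrak s}$ is identified with the analogous set of homogeneous cycles in $CFK^\infty_{\Gamma_2}$ representing $U^{-M}\in HF^\infty(Y,J\mathfrak s+\mathrm{PD}[K])$. The only real substance is matching the differential formulas through the basepoint swap in the first step, which mirrors the computation already carried out in Theorem~\ref{thm:RatSymm}.
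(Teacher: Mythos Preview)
Your overall strategy matches the paper's: verify that $f$ is a $U$-equivariant chain isomorphism landing in the $J\mathfrak s+\mathrm{PD}[K]$ sector (via Lemma~\ref{lem:Js+K}), and conclude that cycles representing $U^{-N}$ go to cycles representing some $U^{-M}$. However, you mislocate the actual content of the lemma. You write that ``the only real substance is matching the differential formulas through the basepoint swap,'' and you dispose of homogeneity with the bare assertion that ``homogeneity is preserved by $f$ up to a uniform grading shift.'' In the paper the emphasis is reversed: the chain-isomorphism and Spin$^c$ parts are dispatched in two lines, and the bulk of the proof is devoted precisely to checking that $f(X)$ is homogeneous in $\mathrm{Gr}_2$.

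This homogeneity is not automatic from what you have written. The generators $[\mathbf x,i,j]$ appearing in a given $X\in\mathcal G_{\mathfrak s}$ can have intersection points $\mathbf x$ lying in \emph{different} relative Spin$^c$ structures $\underline{\mathfrak s}_{w,z}(\mathbf x)=\xi-(i-j)\mathrm{PD}[\mu]$, and by Theorem~\ref{thm:RatSymm}(c) the shift $\mathrm{Gr}_2(\mathbf x)-\mathrm{Gr}_1(\mathbf x)$ depends on that relative structure. One must check that this variable shift is exactly compensated by the swap $2i\leftrightarrow 2j$. The paper does this directly: given two terms $[\mathbf x_1,i_1,j_1]$, $[\mathbf x_2,i_2,j_2]$ of $X$, it chooses a Whitney disk $\phi$ with $n_w(\phi)=i_1-i_2$, uses $\mathrm{Gr}_1$-homogeneity to get $\mu(\phi)=0$, uses the constraint~(\ref{eq:Generator}) to force $n_z(\phi)=j_1-j_2$, and then observes that $-\phi$ connects the images in $\Gamma_2$ with the same Maslov index, yielding $\mathrm{Gr}_2([\mathbf x_1,j_1,i_1])=\mathrm{Gr}_2([\mathbf x_2,j_2,i_2])$. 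Your claim is true, but you should supply this computation (or an equivalent one combining Theorem~\ref{thm:RatSymm}(c) with Lemma~\ref{lem:J}) rather than assert it.
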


\begin{proof}
By Lemma~\ref{lem:Js+K}, the map $$f\co CFK^{\infty}(Y,K,\xi)\rightarrow CFK^{\infty}(Y,K,\widetilde J\xi)$$ descends to $$f\co CF^{\infty}(Y,{\mathfrak s})\rightarrow CF^{\infty}(Y,J{\mathfrak s}+\mathrm{PD}[K]).$$  Moreover, since $f$ is a chain isomorphism, each element of $f(\mathcal G_{\mathfrak s})$ must represent a certain generator $U^{-M} \in HF^{\infty}(Y,J{\mathfrak s}+\mathrm{PD}[K])$ for some $M$.  

Finally, we need to prove that the elements in $f(\mathcal G_{\mathfrak s})$ are homogeneous. 
Let $\mathrm{Gr}_k$ denote the grading pertaining to the Heegaard diagram $\Gamma_k$.
 Suppose $[\mathbf x_1,i_1,j_1]$ and $[\mathbf x_2,i_2,j_2]$ contribute to $X=\sum a_{\mathbf x,i,j}[\mathbf x,i,j]\in\mathcal G_{\mathfrak s}$, then $\mathrm{Gr}_1([\mathbf x_1,i_1,j_1])=\mathrm{Gr}_1([\mathbf x_2,i_2,j_2])$.    Since $\mathbf x_1$ and $\mathbf x_2$ belong to the same Spin$^c$ structure, there exists a topological disk $\phi$ in $\Gamma_1$ connecting them.  By adding an appropriate multiple of $\Sigma$, we may further assume that $n_w(\phi)=i_1-i_2$.  Thus, $\mu(\phi)=0$ according to the Maslov index formula.  Moreover, since $[\mathbf x_1,i_1,j_1]$ and $[\mathbf x_2,i_2,j_2]$ satisfy (\ref{eq:Generator}), we have 
 $$\underline{\mathfrak
s}^1_{w,z}(\mathbf x_1)+(i_1-j_1)PD[\mu]=\underline{\mathfrak
s}^1_{w,z}(\mathbf x_2)+(i_2-j_2)PD[\mu],$$
which implies that (as in Lemma~\ref{lem:RelAlex})
$$n_z(\phi)-n_w(\phi)+(i_1-j_1)=(i_2-j_2).$$
So
 $n_z(\phi)=j_1-j_2$.  Consequently, the disk $-\phi$ connects $[\mathbf x_1,i_1,j_1]$ and $[\mathbf x_2,i_2,j_2]$ in the Heegaard diagram $\Gamma_2$.  As the Maslov index of $\phi$ is invariant under the reversion of orientation, $$\mathrm{Gr}_2([\mathbf x_1,j_1,i_1])-\mathrm{Gr}_2([\mathbf x_2,j_2,i_2])=\mu(\phi)=0.$$ This proved the elements in $f(\mathcal G_{\mathfrak s})$ are homogeneous in $\mathrm{Gr}_2$.
\end{proof}

Applying Lemma~\ref{lem:Minimax} to $f(\mathcal G_{\mathfrak s})$, the set of homogeneous generators that represent $U^{-M}\in HF^\infty(Y,J{\mathfrak s}+\mathrm{PD}[K])$, we conclude:
$$M=\min_{X\in \mathcal G_{\mathfrak s}}  \max _{[\mathbf x,i,j]\in X} j. $$
With the same argument as in Proposition~\ref{prop:Correct}, this leads to the following analogous correction term formula.
\begin{equation}\label{eq:correct2}
d(Y,J{\mathfrak s}+\mathrm{PD}[K])= \max_{X\in \mathcal G_{\mathfrak s}} \min _{[\mathbf x,i,j]\in X} \mathrm{Gr}_2(\mathbf x).
\end{equation}

\subsection{Proof of Theorem~\ref{thm:GenusBound}}
Our proof is based on the following elementary principle.

\begin{lem}\label{LocGlob}
For any bounded sequence of pairs $(a_i,b_i)\in (a,b)+\mathbb  Z^2$, where $(a,b)\in\mathbb R^2$, we have %t $a=\min_i a_i$, $A=\max_i a_i$, $b=\min_i b_i$, $B=\max_i b_i$.
$$|\min_ia_i-\min_ib_i| \leq \max_i|a_i-b_i|, $$
$$|\max_ia_i-\max_ib_i| \leq \max_i|a_i-b_i|.$$
\end{lem}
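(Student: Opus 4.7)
The lemma is essentially a triangle-inequality statement, and the hypothesis that $(a_i,b_i) \in (a,b)+\mathbb Z^2$ together with boundedness is only there to guarantee that the $\min$ and $\max$ are actually attained: the projections $\{a_i\}$ and $\{b_i\}$ lie in bounded translates of $\mathbb Z$, hence are finite sets. So the first step would be to fix this and pick indices $i_0, j_0$ with $a_{i_0} = \min_i a_i$ and $b_{j_0} = \min_i b_i$.

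For the $\min$-inequality, I would then write
$$\min_i a_i \;=\; a_{i_0} \;\le\; a_{j_0} \;=\; b_{j_0} + (a_{j_0} - b_{j_0}) \;\le\; \min_i b_i + \max_i |a_i - b_i|,$$
and symmetrically $\min_i b_i \le \min_i a_i + \max_i|a_i - b_i|$, which together give the first claim. For the $\max$-inequality, by the same token, choosing $i_1, j_1$ realizing the two maxima, I would estimate
$$\max_i a_i = a_{i_1} = b_{i_1} + (a_{i_1} - b_{i_1}) \le \max_i b_i + \max_i |a_i - b_i|,$$
and the other direction is symmetric.

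There is no real obstacle here; the only subtlety is the legitimacy of selecting $i_0, j_0, i_1, j_1$, which is precisely what the lattice-plus-bounded hypothesis supplies. I would state the attainment explicitly once at the start, then run the two triangle-inequality chains, and the proof is complete.
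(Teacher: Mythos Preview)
Your proposal is correct and follows essentially the same approach as the paper: both pick indices realizing the minima, use $\min_i a_i \le a_{j_0}$ to bound the difference by a single $|a_{j_0}-b_{j_0}|$, and observe that the lattice-plus-bounded hypothesis is there only to guarantee attainment. The sole cosmetic difference is that the paper deduces the $\max$-inequality from the $\min$-inequality by the substitution $(a_i,b_i)\mapsto(-a_i,-b_i)$, whereas you rerun the argument directly.
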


\begin{proof}
The condition that $(a_i,b_i)\in (a,b)+\mathbb  Z^2$ is bounded allows us to take minimum and maximum.
Assume $a=a_m=\min_ia_i$ and $b=b_k=\min_ib_i$.  Then, $$a-b=a_m-b_k\leq a_k-b_k;$$ and $$b-a=b_k-a_m\leq b_m-a_m.$$  It readily follow that $$|a-b|\leq \max_i|a_i-b_i|.$$  The second inequality follows from the first by replacing $a_i,b_i$ with $-a_i,-b_i$.
\end{proof}

To bound $|d(Y,{\mathfrak s})-d(Y,J{\mathfrak s}+\mathrm{PD}[K])|$, we apply Lemma~\ref{LocGlob} twice to the equations (\ref{eq:correct}) and (\ref{eq:correct2}).  In the first round, let the pair $$(a_X,b_X)=\Big(\min _{[\mathbf x,i,j]\in X} \mathrm{Gr}_1(\mathbf x), \min _{[\mathbf x,i,j]\in X} \mathrm{Gr}_2(\mathbf x)\Big)$$ and $X\in \mathcal G_{\mathfrak s}$ be the index of the sequence.  We get
$$\Big|d(Y,{\mathfrak s})-d(Y,J{\mathfrak s}+\mathrm{PD}[K])\Big| \leq \max_{X\in \mathcal G_{\mathfrak s}} \Big| \min _{[\mathbf x,i,j]\in X} \mathrm{Gr}_1(\mathbf x) - \min _{[\mathbf x,i,j]\in X} \mathrm{Gr}_2(\mathbf x) \Big|. $$
In the second round, let the pair $$(a_{\mathbf x},b_{\mathbf x})=\left(\mathrm{Gr}_1(\mathbf x),\mathrm{Gr}_2(\mathbf x)\right)$$ and $\mathbf x \in \mathfrak G$ be the index of the sequence.  We get
$$\Big|\min _{[\mathbf x,i,j]\in X} \mathrm{Gr}_1(\mathbf x) - \min _{[\mathbf x,i,j]\in X} \mathrm{Gr}_2(\mathbf x) \Big| \leq \max_{[\mathbf x,i,j]\in X} \Big|\mathrm{Gr}_1(\mathbf x)-\mathrm{Gr}_2(\mathbf x)\Big|.$$
Plugging the second inequality to the first, we obtain:
\begin{equation}\label{eq:GrDiff}
|d(Y,{\mathfrak s})-d(Y,J{\mathfrak s}+\mathrm{PD}[K])| \leq \max_{\mathbf x\in\mathfrak G}|\mathrm{Gr}_1(\mathbf x)-\mathrm{Gr}_2(\mathbf x)|.
\end{equation}

The proof of Theorem~\ref{thm:RatSymm}~(b) implies that
$$\mathrm{Gr}_2(\mathbf x)-\mathrm{Gr}_1(\mathbf x)=A(\widetilde J\underline{\mathfrak s}_{w,z}(\mathbf x))-A(\underline{\mathfrak s}_{w,z}(\mathbf x)).$$
Recall that at the beginning of this section, we assumed that $\widehat{HFK}(Y, K)$ was generated by $\mathbf x\in\mathfrak G$,
Theorem~\ref{thm:RatGenus} then implies that the right hand side of (\ref{eq:GrDiff}) is bounded from above by the left hand side of (\ref{eq:GenusBound}). 
So (\ref{eq:GrDiff}) implies
$$|d(Y,J{\mathfrak s}+\mathrm{PD}[K])-d(Y,{\mathfrak s})|\le 1+\frac{-\chi(F)}{|[\partial F]\cdot [\mu]|}.$$ 
By (\ref{eq:CorrSymm}), $d(Y,\mathfrak s)=d(Y,J\mathfrak s)$. So we get
\begin{eqnarray*}
 1+\frac{-\chi(F)}{|[\partial F]\cdot [\mu]|}&\ge&\max_{\mathfrak s\in\spin(Y)}\big\{d(Y,J\mathfrak s+\mathrm{PD}[K])-d(Y,J\mathfrak s)\big\}\\
&=&\max_{\mathfrak s\in\spin(Y)}\big\{d(Y,\mathfrak s+\mathrm{PD}[K])-d(Y,\mathfrak s)\big\}.
\end{eqnarray*}
This finishes the proof of Theorem~\ref{thm:GenusBound}.

%%%%%
%%%%%
%%%%%
%%%%%
%%%%%

\section{Applications}\label{sect:Appl}

In this section, we apply Theorem~\ref{thm:GenusBound} to compute $\Theta$ for certain homology classes in two types of manifolds: L-spaces and large surgeries on knots in $S^3$.

\subsection{Floer simple knots in L-spaces}

\begin{prop}\label{prop:SimpleGenus}
Suppose $Y$ is an L-space, $K$ is a Floer simple knot in $Y$, $F$ is a genus minimizing rational Seifert surface for $K$. Then the Euler characteristic of $F$ is determined by the formula  
\begin{equation}\label{eq:SimpleGenus}
1+\frac{-\chi(F)}{|[\partial F]\cdot[\mu]|}=\max_{\mathfrak s\in\spin(Y)}\big\{d(Y,\mathfrak s+\mathrm{PD}[K])-d(Y,\mathfrak s)\big\}.
\end{equation}
\end{prop}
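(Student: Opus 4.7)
Theorem~\ref{thm:GenusBound} already supplies the inequality ``$\ge$'' in (\ref{eq:SimpleGenus}), so the plan is to establish the reverse inequality ``$\le$'' using the Floer simple hypothesis. First I would show that, because $Y$ is an L-space and $K$ is Floer simple, for every $\mathfrak s\in\spin(Y)$ there is a unique $\xi_{\mathfrak s}\in G_{Y,K}^{-1}(\mathfrak s)$ with $\widehat{HFK}(Y,K,\xi_{\mathfrak s})\cong\mathbb F$, generated by a single intersection point $\mathbf x_{\mathfrak s}\in\mathfrak G$. Indeed, the Alexander-filtration spectral sequence $\widehat{HFK}(Y,K,\mathfrak s)\Rightarrow \widehat{HF}(Y,\mathfrak s)=\mathbb F$ forces $\mathrm{rank}\,\widehat{HFK}(Y,K,\mathfrak s)\ge 1$ for each $\mathfrak s$, and the Floer simple rank count $\mathrm{rank}\,\widehat{HFK}(Y,K)=|H_1(Y;\mathbb Z)|$ pins every such rank to exactly $1$.

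Next I would compute the two relevant correction terms as Maslov gradings of $\mathbf x_{\mathfrak s}$ in the two Heegaard diagrams. In $\Gamma_1$ the spectral sequence has no room for differentials, so the single surviving class is represented by $\mathbf x_{\mathfrak s}$ and satisfies $\mathrm{Gr}_1(\mathbf x_{\mathfrak s})=d(Y,\mathfrak s)$. Viewing the same intersection point in the dual diagram $\Gamma_2$, Lemma~\ref{lem:Js+K} identifies its underlying Spin$^c$ structure as $J\mathfrak s+\mathrm{PD}[K]$, and the identical rank argument applied to $\Gamma_2$ (in combination with Corollary~\ref{cor:Isom}, which transports the Floer simple hypothesis across $\widetilde J$) yields $\mathrm{Gr}_2(\mathbf x_{\mathfrak s})=d(Y,J\mathfrak s+\mathrm{PD}[K])$. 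Subtracting and invoking the grading-shift computation from the proof of Theorem~\ref{thm:RatSymm}(c),
$$d(Y,J\mathfrak s+\mathrm{PD}[K])-d(Y,\mathfrak s)=\mathrm{Gr}_2(\mathbf x_{\mathfrak s})-\mathrm{Gr}_1(\mathbf x_{\mathfrak s})=A(\widetilde J\xi_{\mathfrak s})-A(\xi_{\mathfrak s}).$$

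Finally, replacing $\mathfrak s$ by $J\mathfrak s$ and applying the symmetry $d(Y,\mathfrak s)=d(Y,J\mathfrak s)$ of (\ref{eq:CorrSymm}), I would conclude
$$\max_{\mathfrak s\in\spin(Y)}\bigl[d(Y,\mathfrak s+\mathrm{PD}[K])-d(Y,\mathfrak s)\bigr]=\max_{\xi\in\mathcal B_{Y,K}}\bigl[A(\widetilde J\xi)-A(\xi)\bigr].$$
By Corollary~\ref{cor:MaxJMin} this right-hand maximum is attained when $A(\xi)=A_{\min}$, where it equals $A_{\max}-A_{\min}$; Theorem~\ref{thm:RatGenus} then identifies this with $1+\frac{-\chi(F)}{|\partial F\cdot\mu|}$ for a minimal genus rational Seifert surface $F$, establishing the desired equality. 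The main obstacle I anticipate is verifying the simultaneous rank-one statement in both diagrams, i.e.\ that $\mathbf x_{\mathfrak s}$ genuinely realizes the correction term in \emph{both} $\Gamma_1$ and $\Gamma_2$, but this falls out cleanly from Corollary~\ref{cor:Isom} together with the L-space rank count.
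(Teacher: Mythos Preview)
Your proposal is correct and follows essentially the same route as the paper's proof: both use the L-space and Floer simple hypotheses to pin down a unique $\xi_{\mathfrak s}\in\mathcal B_{Y,K}$ over each $\mathfrak s$, invoke the symmetry results of Section~\ref{sect:Symm} (Corollary~\ref{cor:Isom} in the paper, the grading computation from Theorem~\ref{thm:RatSymm}(c) in your version) to identify $d(Y,J\mathfrak s+\mathrm{PD}[K])-d(Y,\mathfrak s)$ with $A(\widetilde J\xi_{\mathfrak s})-A(\xi_{\mathfrak s})$, and then finish with Corollary~\ref{cor:MaxJMin} and Theorem~\ref{thm:RatGenus}. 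The only cosmetic difference is that you make the identification $\mathrm{Gr}_1(\mathbf x_{\mathfrak s})=d(Y,\mathfrak s)$ and $\mathrm{Gr}_2(\mathbf x_{\mathfrak s})=d(Y,J\mathfrak s+\mathrm{PD}[K])$ explicit, whereas the paper absorbs this step into its citation of Corollary~\ref{cor:Isom}.
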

\begin{proof}
As in the proof of  Theorem~\ref{thm:GenusBound}, the right hand side of (\ref{eq:SimpleGenus}) is equal to 
$$\max_{\mathfrak s\in\spin(Y)}\big\{d(Y,J\mathfrak s+\mathrm{PD}[K])-d(Y,\mathfrak s)\big\}.$$
As $Y$ is an L-space and $K$ is Floer simple, for any $\mathfrak s\in\spin(Y)$, there exists a $\xi_{\mathfrak s}\in \relspin(Y,K)$ such that $$\widehat{HFK}(Y,K,\xi_{\mathfrak s})\cong\widehat{HFK}(Y,K,\mathfrak s)\cong\widehat{HF}(Y,\mathfrak s)\cong\mathbb Z.$$
Corollary~\ref{cor:Isom} implies that $\widetilde J\xi_{\mathfrak s}=\xi_{J\mathfrak s+\mathrm{PD}[K]}$, and $$A(\widetilde J\xi_{\mathfrak s})-A(\xi_{\mathfrak s})=d(Y,J\mathfrak s+\mathrm{PD}[K])-d(Y,\mathfrak s).$$
Since $\widehat{HFK}(Y,K)$ is supported in these $\xi_{\mathfrak s}$'s, our conclusion follows from Theorem~\ref{thm:RatGenus} and Corollary~\ref{cor:MaxJMin}.
\end{proof}

\begin{proof}[Proof of Theorem~\ref{thm:SimpleL}]
This follows from Theorem~\ref{thm:GenusBound} and Proposition~\ref{prop:SimpleGenus}.
\end{proof}

\begin{prop}\label{prop:SimpleFiber}
Suppose $K$ is a Floer simple knot in an L-space $Y$. Then $K$ is a rationally fibered knot if and only if the right hand side of (\ref{eq:SimpleGenus}) is achieved by exactly one $\mathfrak s\in\spin(Y)$.
\end{prop}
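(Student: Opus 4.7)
The plan is to reduce the fibration criterion from Theorem~\ref{thm:RatFiber} to a counting question about spin$^c$ structures that achieve the maximum on the right hand side of (\ref{eq:SimpleGenus}), exploiting the very rigid structure of $\widehat{HFK}(Y,K)$ in the Floer simple case.

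First, since $Y$ is an L-space and $K$ is Floer simple, for every $\mathfrak s\in\spin(Y)$ there is a unique relative spin$^c$ structure $\xi_{\mathfrak s}\in G_{Y,K}^{-1}(\mathfrak s)$ with $\widehat{HFK}(Y,K,\xi_{\mathfrak s})\cong\mathbb Z$, and all other $\widehat{HFK}$ groups vanish. Therefore
$$\bigoplus_{\xi\in\relspin(Y,K),\,A(\xi)=A_{\max}}\widehat{HFK}(Y,K,\xi)\;\cong\;\mathbb Z^{N},$$
where $N$ is the number of spin$^c$ structures $\mathfrak s$ with $A(\xi_{\mathfrak s})=A_{\max}$. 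By Theorem~\ref{thm:RatFiber}, $K$ is rationally fibered if and only if $N=1$.

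Next, I will show that $N$ equals the number of $\mathfrak s\in\spin(Y)$ that achieve the maximum in the right hand side of (\ref{eq:SimpleGenus}). In the proof of Proposition~\ref{prop:SimpleGenus} it is shown that $\widetilde J\xi_{\mathfrak s}=\xi_{J\mathfrak s+\mathrm{PD}[K]}$ and
$$A(\widetilde J\xi_{\mathfrak s})-A(\xi_{\mathfrak s})=d(Y,J\mathfrak s+\mathrm{PD}[K])-d(Y,\mathfrak s).$$
Combined with the symmetry $d(Y,\mathfrak s)=d(Y,J\mathfrak s)$ from (\ref{eq:CorrSymm}), the right hand side of (\ref{eq:SimpleGenus}) can be rewritten as
$$\max_{\mathfrak s\in\spin(Y)}\bigl\{A(\widetilde J\xi_{\mathfrak s})-A(\xi_{\mathfrak s})\bigr\}.$$
By Corollary~\ref{cor:MaxJMin}, $A(\widetilde J\xi_{\mathfrak s})-A(\xi_{\mathfrak s})$ attains its maximum value $A_{\max}-A_{\min}$ precisely when $A(\xi_{\mathfrak s})=A_{\min}$, equivalently when $A(\widetilde J\xi_{\mathfrak s})=A_{\max}$.

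Finally, the involution $\mathfrak s\mapsto J\mathfrak s+\mathrm{PD}[K]$ on $\spin(Y)$ is a bijection (it squares to the identity since $J^2=\mathrm{id}$ and $J\,\mathrm{PD}[K]=-\mathrm{PD}[K]$), and under this bijection the set of $\mathfrak s$ with $A(\xi_{\mathfrak s})=A_{\min}$ corresponds to the set of $\mathfrak s'=J\mathfrak s+\mathrm{PD}[K]$ with $A(\xi_{\mathfrak s'})=A(\widetilde J\xi_{\mathfrak s})=A_{\max}$. Hence the number of $\mathfrak s$ maximizing the right hand side of (\ref{eq:SimpleGenus}) equals $N$, and the proposition follows by combining this with the first paragraph. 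The only step requiring care is verifying that $\mathfrak s\mapsto J\mathfrak s+\mathrm{PD}[K]$ is indeed an involution and that it respects the $\xi_{\mathfrak s}$ correspondence established via Corollary~\ref{cor:Isom}; both are immediate from the identities already recorded.
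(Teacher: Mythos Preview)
Your proof is correct and follows essentially the same route as the paper's one-line proof, which simply cites Theorem~\ref{thm:RatFiber}, Corollary~\ref{cor:MaxJMin}, and the proof of Proposition~\ref{prop:SimpleGenus}; you have just unpacked those references. One small point worth tightening: when you pass from the original maximum $\max_{\mathfrak s}\{d(Y,\mathfrak s+\mathrm{PD}[K])-d(Y,\mathfrak s)\}$ to $\max_{\mathfrak s}\{A(\widetilde J\xi_{\mathfrak s})-A(\xi_{\mathfrak s})\}$, the substitution $\mathfrak s\mapsto J\mathfrak s$ is a bijection, so it preserves not only the value of the maximum but also the \emph{number} of maximizers---this is what you need and it is immediate, but you state it only at the level of values. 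Also, the claim that $A(\widetilde J\xi_{\mathfrak s})-A(\xi_{\mathfrak s})$ is maximal exactly when $A(\xi_{\mathfrak s})=A_{\min}$ really uses that $A(\widetilde J\xi)+A(\xi)$ is constant on $\mathcal B_{Y,K}$ (from Lemma~\ref{lem:J}), which is slightly more than Corollary~\ref{cor:MaxJMin} alone provides.
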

\begin{proof}
This follows from Theorem~\ref{thm:RatFiber}, Corollary~\ref{cor:MaxJMin} and the proof of  Proposition~\ref{prop:SimpleGenus}.
\end{proof}

\begin{cor}
Suppose $K_1,K_2$ are two Floer simple knots in an L-space $Y$ with $[K_1]=[K_2]\in H_1(Y;\mathbb Z)$, then $K_1$ and $K_2$ have the same rational genus, and $K_1$ is rationally fibered if and only if $K_2$ is rationally fibered.
\end{cor}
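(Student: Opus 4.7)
The plan is to derive both assertions directly from Propositions \ref{prop:SimpleGenus} and \ref{prop:SimpleFiber}, keyed on the single observation that the right-hand side of (\ref{eq:SimpleGenus}),
$$\max_{\mathfrak s\in\spin(Y)}\big\{d(Y,\mathfrak s+\mathrm{PD}[K])-d(Y,\mathfrak s)\big\},$$
depends on $K$ only through $\mathrm{PD}[K]\in H^2(Y;\mathbb Z)$, hence only through the homology class $[K]\in H_1(Y;\mathbb Z)$. Since $[K_1]=[K_2]$, this quantity is the same for the two knots.

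For the equality of rational genera, I would let $F_i$ be a genus-minimizing rational Seifert surface for $K_i$, apply Proposition~\ref{prop:SimpleGenus} to each $K_i$, and equate the two left-hand sides:
$$1+\frac{-\chi(F_1)}{|[\partial F_1]\cdot[\mu]|}=1+\frac{-\chi(F_2)}{|[\partial F_2]\cdot[\mu]|}.$$
Taking $\max\{0,\cdot\}$ and dividing by $2$ recovers the rational genus from Calegari--Gordon's definition, so $\|K_1\|=\|K_2\|$.

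For the fibration statement, I would invoke Proposition~\ref{prop:SimpleFiber}: each $K_i$ is rationally fibered if and only if the max on the right-hand side of (\ref{eq:SimpleGenus}) is attained by exactly one $\mathfrak s\in\spin(Y)$. Since the function $\mathfrak s\mapsto d(Y,\mathfrak s+\mathrm{PD}[K_i])-d(Y,\mathfrak s)$ is literally the same function for $i=1,2$, the uniqueness of its maximizer is a joint property, so $K_1$ is rationally fibered iff $K_2$ is.

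There is no real obstacle here: both halves of the corollary are immediate consequences of the characterizations already proved in the preceding two propositions, together with the evident fact that $\mathrm{PD}[K]$ is a homology invariant. The only mild care needed is to remember the $\max\{0,\cdot\}$ in the definition of $\|K\|$ when passing from equality of $-\chi(F)/|[\partial F]\cdot[\mu]|$ to equality of rational genus, which is automatic.
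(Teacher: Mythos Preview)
Your proposal is correct and follows essentially the same approach as the paper: the paper's proof is a one-line appeal to Propositions~\ref{prop:SimpleGenus} and~\ref{prop:SimpleFiber} together with the observation that the right-hand side of (\ref{eq:SimpleGenus}) depends only on $[K]$, which is exactly what you spell out in slightly more detail.
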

\begin{proof}
This follows from Propositions~\ref{prop:SimpleGenus} and \ref{prop:SimpleFiber} by observing that the right hand side of (\ref{eq:SimpleGenus}) only depends on the homology class $[K]$.
\end{proof}

\subsection{Large surgeries on knots}\label{subsect:LargeSurg}

In this subsection, we will consider another case of the rational genus bound.
Suppose that $K$ is a knot in a homology sphere $Y$. Let $Y_p(K)$ be the manifold obtained by $p$--surgery on $K$, and let $K'\subset Y_p(K)$ be the dual knot of the surgery. We can isotope $K'$ to be a curve on $\partial\nu(K')=\partial\nu(K)$ such that this curve is isotopic to the meridian $\mu$ of $K$. We always orient $K'$ such that the orientation coincides with the standard orientation on $\mu$. If $F$ is a Seifert surface for $K$, then $F$ (or $-F$ if one cares about the orientation) is also a rational Seifert surface for $K'$. So $\frac{2g(K)-1}p$ is an upper bound for $\Theta([K'])$. Theorem~\ref{thm:GenusBound} gives a lower bound for $\Theta([K'])$, which we will compute.

The set of Spin$^c$ structures $\spin(Y_p(K))$ is in one-to-one correspondence with $H^2(Y_p(K))\cong\mathbb Z/p\mathbb Z$.
However, this correspondence is generally not canonical. 
Ozsv\'ath and Szab\'o \cite{OSzKnot} specified an identification of $\spin(Y_p(K))$ with $\mathbb Z/p\mathbb Z$ as follows.
Let $F\subset Y$ be a Seifert surface for $K$, $W\co Y_p(K)\to Y$ be the $2$--handle cobordism, and $\widehat{F}\subset W$ be the surface obtained from $F$ by capping off $\partial F$ with the cocore of the $2$--handle. For any $i\in\mathbb Z$, let $\mathfrak x_i\in\spin(W)$ be the Spin$^c$ structure satisfying that
$$\langle c_1(\mathfrak x_i),[\widehat F]\rangle=2i-p.$$
Now we define a map $\sigma\co\spin(Y_p(K))\to\mathbb Z/p\mathbb Z$ by
$$\sigma(\mathfrak x_i|Y_p(K))\equiv i\pmod p.$$
This map is well-defined, and is the identification we want.

By \cite{OSz4Manifold}, the Spin$^c$ cobordism $(W,\mathfrak x_i)\co (Y_p(K),i)\to Y$ induces an isomorphism
$$F^{\infty}_{(W,\mathfrak x_i)}\co HF^{\infty}(Y_p(K),i)\to HF^{\infty}(Y),$$
which shifts the grading by $\frac{-(2i-p)^2+p}{4p}$. Using the definition of correction terms, we see that 
\begin{equation}\label{eq:CorrMod}
d(Y)-d(Y_{p}(K),i)\equiv\frac{-(2i-p)^2+p}{4p}\pmod2.
\end{equation}
When $Y$ is an L-space, there is a more precise formula relating $d(Y)$ and $d(Y_{p}(K),i)$ in \cite{NiWu}, which we briefly describe below. 
From $CFK^{\infty}(Y,K)$, one can define two sequences of nonnegative integers $V_k,H_k$, $k\in\mathbb Z$ satisfying that
\begin{equation}\label{eq:VH}
V_k=H_{-k},\quad V_k\ge V_{k+1}\ge V_k-1,\quad V_{g(K)}=0.
\end{equation}
When $Y$ is an L-space,
the correction terms of $Y_{p}(K)$ can be computed by the formula
\begin{equation}\label{eq:CorrSurg}
d(Y_{p}(K),i)=d(Y)+d(L(p,1),i)-2\max\{V_i,H_{i-p}\}.
\end{equation}

From \cite{OSzAbGr} we know that
\begin{equation}\label{eq:Lp1}
d(L(p,1),i)=\frac{(2i-p)^2-p}{4p}
\end{equation}
when $0\le i\le p$. Using (\ref{eq:CorrSurg}), when
 $0\le i<p$, we get
\begin{eqnarray*}
&&d(Y_p(K),i+1)-d(Y_p(K),i)\\
&=&\frac{(2i+2-p)^2-p}{4p}-2\max\{V_{i+1},H_{i+1-p}\}-\left(\frac{(2i-p)^2-p}{4p}-2\max\{V_i,H_{i-p}\}\right) \\
&=&\frac{2i+1-p}p-2\max\{V_{i+1},V_{p-1-i}\}+2\max\{V_i,V_{p-i}\}.
\end{eqnarray*}

Applying Theorem~\ref{thm:GenusBound}, we see that $\Theta([K'])$ is bounded from below by
\begin{equation}\label{eq:SurgBound}
\max_{i\in\{0,1,\dots,p-1\}}\left\{\frac{2i+1-2p}p-2\max\{V_{i+1},V_{p-1-i}\}+2\max\{V_i,V_{p-i}\}\right\}.
\end{equation}

The bound given by (\ref{eq:SurgBound}) is not always sharp, as there are nontrivial knots with $V_k=0$ whenever $k\ge0$. In this case the result of (\ref{eq:SurgBound}) is $-\frac1p$. However, we can still compute $\Theta([K'])$ for large surgeries on knots in $S^3$.

\begin{lem}\label{lem:SpinId}
Suppose that $Y,Z$ are two homology spheres, and $K\subset Y$ and $L\subset Z$ are two knots. Let $p$ be a positive integer, $K'\subset Y_p(K)$ and $L'\subset Z_p(L)$ be the dual knots of the surgeries. If there is an orientation preserving homeomorphism $f\co Y_p(K)\to Z_p(L)$ with $f_*[K']=[L']$, then the induced map on the Spin$^c$ structures $$f_{\star}\co\spin(Y_p(K))\to \spin(Z_p(L))$$ is given by the identity map of $\mathbb Z/p\mathbb Z$.
\end{lem}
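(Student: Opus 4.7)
The plan is to give an intrinsic characterization of the Ozsv\'ath--Szab\'o identification $\sigma$ in terms of the pair $(Y_p(K),[K'])$, which will make the invariance under $f$ manifest. The key input is a first Chern class computation, which I would carry out as follows. Let $D\subset W$ denote the cocore disk of the $2$-handle; it satisfies $\partial D = K'\subset Y_p(K)$ and $D\cdot\widehat F = 1$, since the core and cocore of a $2$-handle meet transversely in a single point. Therefore $\mathrm{PD}[D]$ generates $H^2(W)\cong\mathbb Z$ with $\langle\mathrm{PD}[D],[\widehat F]\rangle = 1$, and the defining relation $\langle c_1(\mathfrak s_i),[\widehat F]\rangle = 2i+p$ forces $c_1(\mathfrak s_i) = (2i+p)\,\mathrm{PD}[D]$. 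Restricting to $Y_p(K)$, where $\mathrm{PD}[D]$ maps to $\mathrm{PD}[\partial D] = \mathrm{PD}[K']$, yields
\[
c_1(\mathfrak s_i|_{Y_p(K)}) = 2i\cdot\mathrm{PD}[K'] \in H^2(Y_p(K))\cong\mathbb Z/p\mathbb Z.
\]

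Next, I use the properties of $f$. Being orientation preserving, $f$ commutes with Poincar\'e duality, so $f_*\mathrm{PD}[K'] = \mathrm{PD}[L']$; being a homeomorphism, it also preserves first Chern classes of $\mathrm{Spin}^c$ structures. Combined with the fact that $f_*\colon\spin(Y_p(K))\to\spin(Z_p(L))$ is affine over the induced isomorphism $f_*\colon H^2(Y_p(K))\to H^2(Z_p(L))$, it follows that the composition $\sigma\circ f_*\circ\sigma^{-1}$ is a translation $i\mapsto i+c$ of $\mathbb Z/p\mathbb Z$ with $2c\equiv 0\pmod p$. For $p$ odd, $c$ must vanish and the proof is complete.

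The case of $p$ even is the main obstacle: here $c$ could a priori be either $0$ or $p/2$, and the Chern class alone cannot distinguish $\mathfrak t_0 := \mathfrak s_0|_{Y_p(K)}$ from $\mathfrak t_{p/2}$, since both have trivial $c_1$. To exclude $c = p/2$, I plan to characterize $\mathfrak t_0$ intrinsically in terms of $(Y_p(K),[K'])$ alone. A natural description is as the spin structure on $Y_p(K)$ obtained by gluing the restriction of the unique spin structure on the integer homology sphere $Y$ (defined on the knot complement $Y_p(K)\setminus\nu(K')\cong Y\setminus\nu(K)$) to the spin structure on $\nu(K')$ determined by the rational longitude of $K'$, i.e.\ the primitive slope on $\partial\nu(K')$ that is rationally null-homologous in the complement. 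Since both the knot complement and the rational longitude depend only on the pair $(Y_p(K),[K'])$, this characterization is preserved by $f$, ruling out $c=p/2$ and finishing the argument.
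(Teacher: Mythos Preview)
Your reduction to the dichotomy $c=0$ or $c=p/2$ is fine and essentially the paper's argument rephrased: your Chern--class constraint $2c\equiv 0$ is equivalent to the paper's use of $J(i)\equiv -i$ together with $f_*J=Jf_*$. The trouble is entirely in how you dispose of $c=p/2$ when $p$ is even.

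Your proposed characterization of $\mathfrak t_0$ is not intrinsic to the pair $(Y_p(K),[K'])$. It uses the knot complement $Y_p(K)\setminus\nu(K')$ and the homology sphere $Y$, both of which depend on the actual knot $K'$, not merely its homology class. The homeomorphism $f$ carries $K'$ to some knot $f(K')\subset Z_p(L)$ in the class $[L']$, but there is no reason for $f(K')$ to equal $L'$; your construction applied to $(Z_p(L),f(K'))$ recovers $f_*\mathfrak t_0$ by transport of structure, but comparing this to the construction applied to $(Z_p(L),L')$ is exactly the statement you are trying to prove. In short, the assertion ``the knot complement and the rational longitude depend only on $(Y_p(K),[K'])$'' is false, and the argument is circular. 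There is also a secondary issue: when $p$ is even the rational longitude $\lambda$ of $K'$ agrees with the meridian $\mu_{K'}$ modulo $2$ (since $\lambda=\mu_{K'}-p\,\lambda_{K'}$), so \emph{both} spin structures on $\nu(K')$ restrict to the bounding spin structure along $\lambda$; the phrase ``the spin structure on $\nu(K')$ determined by the rational longitude'' does not single one out.

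The paper handles the even case by an entirely different mechanism: assuming $f_*(i)=i+p/2$, it equates $d(Y_p(K),i)$ with $d(Z_p(L),i+p/2)$, expands both sides via the surgery formula $d(Y_p(K),i)=d(Y)+d(L(p,1),i)-2\max\{V_i,H_{i-p}\}$, and obtains an identity whose left side is an even integer for every $i$ while the right side is $i-p/4$, a contradiction. So the obstruction is analytic (correction terms), not a purely topological identification of a preferred spin structure.
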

\begin{proof}
From the identification $\spin(Y_p(K))\cong\mathbb Z/p\mathbb Z$ described before, we can conclude that the conjugation $J$ on $\spin(Y_p(K))$ is given by $$J(i)\equiv -i\pmod p.$$ Since $f$ is a homeomorphism, one should have 
\begin{equation}\label{eq:CommJ}
f_{\star}J=Jf_{\star}.
\end{equation}
Moreover, the homology class $[K']$ corresponds to $1\in H_1(Y_p(K))\cong\mathbb Z/p\mathbb Z$, and a similar result is true for $[L']$. Since $f_*[K']=[L']$, $f_{\star}$ should satisfy 
\begin{equation}\label{eq:+1}
f_{\star}(i+1)-f_{\star}(i)=1.
\end{equation}
When $p$ is odd, the only affine isomorphism on $\mathbb Z/p\mathbb Z$ satisfying (\ref{eq:CommJ}) and (\ref{eq:+1}) is the identity, so our conclusion holds in this case.

When $p=2n$ is even, the affine isomorphisms satisfying (\ref{eq:CommJ}) and (\ref{eq:+1}) are the identity and $i\mapsto i+\frac{p}2$, we only need to show that the latter case cannot happen. Otherwise, we should have
$$d(Y_p(K),i)=d(Z_p(L),i+n),\quad i\in\mathbb Z/(2n\mathbb Z).$$
By (\ref{eq:CorrMod}), we get
$$d(Y)+\frac{2(i-n)^2-n}{4n}\equiv d(Z)+\frac{2i^2-n}{4n} \pmod2,\quad \text{when }0\le i\le n.$$
So
\begin{equation}\label{eq:CorrDiff}
d(Y)-d(Z)\equiv i-\frac{n}2\pmod2,\quad \text{when }0\le i\le n.
\end{equation}
Noting that the correction term of a homology sphere is always an even integer,
so the right hand side of (\ref{eq:CorrDiff}) is even for any $i=0,\dots,n$, which is impossible.
\end{proof}

\begin{proof}[Proof of Proposition~\ref{prop:S3Surg}]
If $K'$ is not genus minimizing, then there exists a knot $L'\subset S^3_p(K)$ with $[L']=[K']$ and $||L'||<||K'||$. There is a natural isomorphism $H_1(\partial\nu(L'))\cong H_1(\partial\nu(K'))$. Let $\mu_L$ be the slope on $L'$ corresponding to the meridian of $K$ under the previous isomorphism. Let $Z$ be the manifold obtained from $S^3_p(K)$ by $\mu_L$--surgery on $L'$, and let $L$ be the dual knot. Then it is elementary to check that $Z$ is a homology sphere, $S^3_p(K)=Z_p(L)$ and $g(L)<g(K)$.

By Lemma~\ref{lem:SpinId}, we have
\begin{equation}\label{eq:HF=}
HF^+(S^3_p(K),i)\cong HF^+(Z_p(L),i), \quad\text{for any }i\in\mathbb Z/p\mathbb Z.
\end{equation}
Since $p>2g(K)-1>2g(L)-1$,  it follows from \cite[Theorem~4.4]{OSzKnot} that  $$HF^+(S^3_p(K),g(K))\cong HF^+(S^3),\quad HF^+(Z_p(L),g(K))\cong HF^+(Z).$$
By (\ref{eq:HF=}), we have $HF^+(Z)\cong HF^+(S^3)$, hence $Z$ is an L-space.

Since $p>2g(L)-1$ and $g(K)-1\ge g(L)$, we have
\begin{equation}\label{eq:Z}
HF^+(S^3_p(K),g(K)-1)\cong HF^+(Z_p(L),g(K)-1)\cong HF^+(Z).
\end{equation}
 For $C=CFK^{\infty}(S^3,K)$, consider the natural short exact sequence
$$
\begin{CD}
0@>>>C\{i<0,j\ge g(K)-1\}@>>>C\{i\ge0\text{ or }j\ge g(K)-1\}@>>>C\{i\ge0\}@>>>0,
\end{CD}
$$
which induces a long exact sequence
$$
\begin{CD}
\cdots@>>> \widehat{HFK}(K,g(K))@>>> HF^+(S^3_p(K),g(K)-1)@>v_{g(K)-1}>>HF^+(S^3)@>>>\cdots.
\end{CD}
$$
We have $HF^+(S^3)\cong\mathcal T^+:=\mathbb Z[U,U^{-1}]/U\mathbb Z[U]$. By (\ref{eq:Z}), $$HF^+(S^3_p(K),g(K)-1)\cong HF^+(Z)\cong\mathcal T^+.$$
Hence $v_{g(K)-1}$ is equivalent to $U^{V_{g(K)-1}}\co \mathcal T^+\to\mathcal T^+$. As $\widehat{HFK}(K,g(K))\ne 0$ \cite{OSzGenus}, we have $V_{g(K)-1}>0$. By (\ref{eq:VH}), $V_{g(K)-1}=1$ and $V_{g(K)}=0$.

In (\ref{eq:SurgBound}), letting $i=g(K)-1$ and using the fact that $p\ge 2g(K)$, we get
$$\Theta([K'])\ge\frac{2g(K)-2+1-2p}p-2V_{g(K)}+2V_{g(K)-1}=\frac{2g(K)-1}p,$$
which contradicts the assumption that $K'$ is not genus minimizing.
\end{proof}

\end{document}